\numberwithin{equation}{section}
\theoremstyle{plain}
\newtheorem{Th}{Theorem}[section]
\newtheorem{Lemma}[Th]{Lemma}
\newtheorem{Prop}[Th]{Proposition}
\theoremstyle{definition}
\newtheorem{Def}[Th]{Definition}
\newtheorem{Que}{Question}
\newtheorem{Rem}[Th]{Remark}
\newtheorem{?}[Th]{Problem}
\begin{document}

\title[On the kernel of the zero-surgery homomorphism] % 페이제 머릿글에 달리는 짧은 제목 (제목이 길 경우)
{On the kernel of the zero-surgery homomorphism from knot concordance}

\author{Dongsoo Lee}

\address{Department of Mathematics, Michigan State University, East Lansing, MI, 48824} 

\email{leedon56@msu.edu}

 \subjclass[2010]{57N70, 57M10, 57M25.}

 \keywords{Knot concordance, $\widetilde{H}$-cobordism}

\begin{abstract}
Kawauchi defined a group structure on the set of homology $S^1$$\times$$S^2$'s under an equivalence relation called $\widetilde{H}$-cobordism. This group receives a homomorphism from the knot concordance group, given by the operation of zero-surgery. It is natural to ask whether the zero-surgery homomorphism is injective. We show that this question has a negative answer in the smooth category. Indeed, using knot concordance invariants derived from knot Floer homology we show that the kernel of the zero-surgery homomorphism contains a $\mathbb{Z}^\infty$-subgroup. 
\end{abstract}

\maketitle

\section{Introduction}
In 1976, Kawauchi introduced an equivalence relation on 3-dimensional manifolds with the homology of $S^1\times S^2$ \cite{K1}. This notion, which he refers to as {\em $\widetilde{H}$-cobordism}, has the virtue of allowing a natural group structure induced by an operation $\bigcirc$ called the {\em circle union}. This group is denoted by $\Omega(S^1\times S^2)$ and is called the {\em $\widetilde{H}$-cobordism group}. An interesting feature of the $\widetilde{H}$-cobordism group is that it receives a homomorphism from the knot concordance group $\mathcal{C}$ using the zero-surgery operation. It is natural to wonder how faithfully the knot concordance group is reflected in the $\widetilde{H}$-cobordism group under this map.

\begin{Que}
Is the zero-surgery homomorphism $\omega:\mathcal{C}\to\Omega(S^1\times S^2)$ injective?
\end{Que}

Closely related to $\widetilde{H}$-cobordism is the more well-known notion of $\mathbb{Z}$-homology cobordism between 3-manifolds. A $\mathbb{Z}$-homology cobordism between $Y_0$ and $Y_1$ is a cobordism $W$ such that the inclusions $Y_i$ $\hookrightarrow$ $W$, $i=0, 1$, induce isomorphisms on integral homology groups. In \cite{CFHH}, the question of injectivity of the zero-surgery map from the knot concordance group to the set of all $\mathbb{Z}$-homology cobordism classes of 3-manifolds with the homology of $S^1\times S^2$ was addressed by Cochran, Franklin, Hedden, and Horn. 

\newtheorem*{t1}{Theorem 3.1 of \cite{CFHH}}
\begin{t1}
There exist topologically slice knots whose 0-surgeries are smoothly $\mathbb{Z}$-homology cobordant rel meridians, but which are not smoothly concordant.\
\end{t1}
That 0-surgeries on knots are smoothly $\mathbb{Z}$-homology cobordant {\em rel meridians} means that the positively-oriented meridians of knots are homologous in the first homology group of a $\mathbb{Z}$-homology cobordism between them.\

\smallskip

The following result of Yasui \cite{Y} can be used to establish that the zero-surgery homomorphism $\omega$ is not injective:

\newtheorem*{t2}{Theorem 1.9 of \cite{Y}}
\begin{t2}
There exists a pair of knots in $S^3$ with the same 0-surgery which are not smoothly concordant for any orientations. Furthermore, there exist infinitely many distinct pairs of such knots.\
\end{t2}

Perhaps the most compelling reason to study $\widetilde{H}$-cobordism is that the group structure present in this context allows one to quantify the failure of injectivity of the zero-surgery map. Despite their abundance, Yasui's examples do not lead to large subgroups of {\rm ker($\omega$)}, since the 3-manifolds obtained via 0-surgeries on the pairs have no (obvious) relationship.\

The purpose of this article is to demonstrate that {\rm ker($\omega$)} is quite large. Indeed, inspired by Cochran, Franklin, Hedden, and Horn's work, we show:

\newtheorem*{MT}{Theorem 1}
\begin{MT}
The kernel of the zero-surgery homomorphism $\omega:\mathcal{C}\to\Omega(S^1\times S^2)$ contains a subgroup isomorphic to $\mathbb{Z}^\infty$.
\end{MT}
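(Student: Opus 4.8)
The plan is to split the proof into a purely formal reduction inside the knot concordance group, a construction of the needed infinite family of knots built from the Cochran--Franklin--Hedden--Horn examples, and a knot Floer homology computation that separates them.

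\emph{Reduction.} I would first record the formal properties of $\omega$: it is a group homomorphism, the identity of $\Omega(S^1\times S^2)$ is represented by $S^1\times S^2$ itself, one has $S^3_0(K_1\#K_2)=S^3_0(K_1)\bigcirc S^3_0(K_2)$ where the circle union is taken along the surgery-dual meridians, and $S^3_0(-K)=-S^3_0(K)$, so that $\omega(-K)=-\omega(K)$ (here $-K$ denotes the inverse of $K$ in $\mathcal{C}$). Consequently $K\in\ker\omega$ precisely when $S^3_0(K)$ is $\widetilde{H}$-cobordant to $S^1\times S^2$, and it suffices to exhibit knots $J_1,J_2,\dots$ that (i) each have $S^3_0(J_i)$ $\widetilde{H}$-cobordant to $S^1\times S^2$ and (ii) are linearly independent in $\mathcal{C}$; then $\langle J_1,J_2,\dots\rangle\cong\mathbb{Z}^\infty$ sits inside $\ker\omega$.

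\emph{The family.} To arrange (i), I would produce for each $i$ a pair of topologically slice knots $K_i,K_i'$ whose $0$-surgeries are smoothly $\mathbb{Z}$-homology cobordant rel meridians --- an infinite-family refinement of Theorem 3.1 of \cite{CFHH}, which I expect to realize by infecting a fixed knot along appropriately chosen null-homologous curves (so that the infection leaves the $0$-surgery unchanged up to $\mathbb{Z}$-homology cobordism rel meridians) while varying the infecting knots with $i$. I would then establish the lemma that a $\mathbb{Z}$-homology cobordism rel meridians between two $0$-surgeries promotes to an $\widetilde{H}$-cobordism --- the content being that along such a cobordism the homology of the infinite cyclic cover is also preserved, which for the infection cobordisms above can be checked level by level in the derived series. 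Setting $J_i:=K_i\#(-K_i')$, we then get $S^3_0(J_i)=S^3_0(K_i)\bigcirc(-S^3_0(K_i'))$, which is $\widetilde{H}$-cobordant to $S^3_0(K_i)\bigcirc(-S^3_0(K_i))=[S^1\times S^2]$; hence $J_i\in\ker\omega$, and since $J_i$ is a connected sum of topologically slice knots it is itself topologically slice (so one in fact obtains a $\mathbb{Z}^\infty$ inside $\ker\omega$ made of topologically slice knots).

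\emph{Independence, and the main obstacle.} The remaining task is (ii): the $J_i$ must be linearly independent in $\mathcal{C}$. The ``rel meridians'' hypothesis forces the Seifert form and all classical and algebraic-concordance invariants of $J_i$ to be trivial, so the independence has to be detected smoothly, by knot Floer homology. I would arrange this by choosing the infecting knots so that a satellite/infection formula lets me compute a suitable Floer concordance invariant of the $J_i$ --- the Upsilon function $\Upsilon_{J_i}(t)$, the integer-valued homomorphisms $\varphi_j$ of Dai--Hom--Stoffregen--Truong, or the correction terms $V_0$ --- and so that these behave independently as $i$ varies: for instance, taking the companions to be connected sums of torus knots $T_{2,2k+1}$ with $k$ depending on $i$ so that the breakpoints of the $\Upsilon_{J_i}$ occur at distinct parameters, or so that the matrix $\bigl(\varphi_j(J_i)\bigr)_{i,j}$ has infinite rank. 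A standard linear-algebra argument then shows that no nontrivial combination $\sum_i a_iJ_i$ is slice, completing the proof. I expect this last step to be the real obstacle: the invariants used must be simultaneously insensitive enough to survive the infection/satellite operations and be explicitly computable on the $J_i$, yet sharp enough to witness infinitely many independent directions at once, all while the knots are constrained so that their $0$-surgeries are $\widetilde{H}$-cobordant.
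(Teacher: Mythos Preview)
Your broad outline --- form differences $J_i=K_i\#(-K_i')$ of knots with $\widetilde{H}$-cobordant $0$-surgeries, then separate the $J_i$ with Floer invariants --- matches the paper's, but both of your two main steps are left as genuine gaps, and the paper fills them quite differently and much more concretely.

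\emph{Producing elements of $\ker\omega$.} Your proposed lemma, that a $\mathbb{Z}$-homology cobordism rel meridians between $0$-surgeries promotes to an $\widetilde{H}$-cobordism, is \emph{not} known in general; the paper explicitly flags this as open in Remark~\ref{rmk1}. What the paper actually does is take the specific CFHH cobordism from $S^3_0(K)$ to $S^3_0(P(K))$, for $P$ a winding-number-$\pm1$ pattern with $P(U)$ unknotted, write it explicitly as one $1$-handle plus one $2$-handle on $S^3_0(K)\times[0,1]$, and verify directly that the rational homology of its infinite cyclic cover is finitely generated (Theorem~\ref{thm1}). Your proposed mechanism, ``level by level in the derived series'', is not what is needed: the $\widetilde{H}$-condition concerns only the single infinite-cyclic cover, and the check is a concrete handle computation. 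Note also that the CFHH construction is a winding-number-$\pm1$ satellite, not an infection along a null-homologous (i.e.\ winding-number-$0$) curve; for the latter it is not at all clear how to produce the required cobordism of $0$-surgeries.

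\emph{Linear independence.} The paper exploits an observation your sketch does not use: $V_0$ \emph{is} an invariant of the $\mathbb{Z}$-homology cobordism class of the $0$-surgery, while $\tau$ is \emph{not}. Setting $K_n=P^n(T_{2,3})\#(-T_{2,3})$ with $P$ the Mazur pattern, Levine's formula (Theorem~\ref{L1}) gives $\tau(K_n)=n$, while the homology-cobordism invariance of $V_0$ forces $V_0(P^n(T_{2,3}))=V_0(T_{2,3})=1$, whence $0<V_0(K_n)\le1$. Thus $\tau(K_n)/V_0(K_n)\to\infty$, and Chen's criterion (Proposition~\ref{C2}) immediately yields a $\mathbb{Z}^\infty$. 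Your alternative --- varying torus-knot companions so that the $\Upsilon_{J_i}$ have distinct breakpoints, or invoking the $\varphi_j$ --- would require computing $\Upsilon$ or the knot Floer complex of Mazur satellites of those companions, for which no general formula is available; the iteration trick is precisely what lets the paper avoid any such computation. (Incidentally, the paper's $K_n$ are built from $T_{2,3}$ and are not claimed to be topologically slice, so your aim of landing in the topologically slice subgroup is a strictly stronger goal that the paper does not address.)
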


There are a number of related questions that arise from our work. For instance, it is natural to wonder about the cokernel of the zero-surgery homomorphism.
\begin{Que}{\rm(c.f. \cite{HKMP})}
Is $\omega:\mathcal{C}\to\Omega(S^1\times S^2)$ surjective? If not, how big is the $\mathrm{coker}(\omega)$?
\end{Que}
It is also natural to ask if our result holds in the topological category.
\begin{Que}
Is $\mathrm{ker}(\mathcal{C}^{top}\to\Omega^{top}(S^1\times S^2))$ non-trivial?
\end{Que}
One might expect that in this latter category $\omega$ would be closer to an isomorphism.\
Certainly the techniques we use are manifestly smooth.\\
{\bf Outline:} In Section 2, we briefly review the $\widetilde{H}$-cobordism group $\Omega(S^1\times S^2)$ and the zero-surgery homomorphism $\omega:\mathcal{C}\to\Omega(S^1\times S^2)$. We also discuss several properties of the knot concordance invariants $\Upsilon$, $\tau$ and $\{V_i\}$ derived from knot Floer homology. In Section 3, we establish a relationship between satellite operations and $\widetilde{H}$-cobordism. Using the aforementioned knot invariants, in Section 4 we show that there is a $\mathbb{Z}^\infty$-subgroup in $\mathrm{ker}(\omega)$.

\section*{Acknowledgement}
I would like to thank my advisor Matthew Hedden for his willingness and patience to teach me the subject and his support. I also thank Yewon Joung for her help and conversations.

\section{Preliminaries}

\subsection{An overview of the $\widetilde{H}$-cobordism group $\Omega(S^1\times S^2)$}
In this subsection, we review the definitions and basic properties of Kawauchi's $\widetilde{H}$-cobordism group. We refer the reader to \cite{K1} for more details.\

A 3-dimensional {\em homology orientable handle} is a compact, orientable 3-manifold whose integral homology groups are isomorphic to those of $S^1\times S^2$. A {\em distinguished homology handle} is a pair $(Y, \alpha)$ consisting of an oriented homology handle $Y$ and a specified generator $\alpha$ of $H_1(Y;\mathbb{Z})$.\

\begin{Def}
Two distinguished homology handles $(Y_0,\alpha_0)$ and $(Y_1,\alpha_1)$ are {\em $\widetilde{H}$-cobordant} if there is a compact, connected, and oriented 4-dimensional manifold $W$ with $\partial W=-Y_0\sqcup Y_1$ and a cohomology class $\varphi\in H^1(W;\mathbb{Z})$ such that
\begin{enumerate}
\item $\varphi_{|_{Y_i}}$ are dual to $\alpha_i$ for $i=0, 1$,
\item $H_*(\widetilde{W}_\varphi;\mathbb{Q})$ is finitely generated over $\mathbb{Q}$ for each $*$, where $\widetilde{W}_\varphi$ is the infinite cyclic covering of $W$ associated with $\varphi$.
\end{enumerate}
If they are $\widetilde{H}$-cobordant, we write $(Y_0,\alpha_0)\sim(Y_1,\alpha_1)$ and call $(W,\varphi)$ (or simply $W$)  an {\em $\widetilde{H}$-cobordism} between $(Y_0,\alpha_0)$ and $(Y_1,\alpha_1)$ (or between $Y_0$ and $Y_1$). 
\end{Def}

\begin{Lemma}\cite{K1}\label{lem1}
$\widetilde{H}$-cobordism is an equivalence relation.
\end{Lemma}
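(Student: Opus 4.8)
The plan is the standard three-part verification, the only genuine content being to keep condition~(2) of the definition under control when passing to product cobordisms and to glued cobordisms.

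\textit{Reflexivity.} Given $(Y,\alpha)$, take $W=Y\times[0,1]$ with $\varphi=\mathrm{pr}^{*}(\alpha^{\vee})$, where $\mathrm{pr}\colon Y\times[0,1]\to Y$ is the projection and $\alpha^{\vee}\in H^1(Y;\mathbb{Z})\cong\mathbb{Z}$ is the class with $\langle\alpha^{\vee},\alpha\rangle=1$. Condition~(1) is immediate. The infinite cyclic cover of $Y\times[0,1]$ associated with $\varphi$ deformation retracts onto the infinite cyclic cover $\widetilde{Y}$ of $Y$ associated with $\alpha^{\vee}$, so condition~(2) reduces to the fact (established in \cite{K1}) that the infinite cyclic cover of a closed homology handle has finitely generated rational homology. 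One way to see this: the Wang exact sequence, combined with $H_{*}(Y;\mathbb{Q})\cong H_{*}(S^1\times S^2;\mathbb{Q})$, forces $t_{*}-1$ to act surjectively on $H_1(\widetilde{Y};\mathbb{Q})$, which makes that finitely generated $\mathbb{Q}[t,t^{-1}]$-module torsion, hence finite-dimensional over $\mathbb{Q}$; an Euler-characteristic count $\sum_i(-1)^i\operatorname{rank}_{\mathbb{Q}[t,t^{-1}]}H_i(\widetilde{Y};\mathbb{Q})=\chi(Y)=0$, in which the degree-$0$ and degree-$3$ terms of the cover have rank $0$, then gives the same for $H_2(\widetilde{Y};\mathbb{Q})$. \textit{Symmetry.} If $(W,\varphi)$ is an $\widetilde{H}$-cobordism from $(Y_0,\alpha_0)$ to $(Y_1,\alpha_1)$, reverse the orientation of $W$ and keep the class $\varphi$; the oriented boundary becomes $Y_0\sqcup(-Y_1)=(-Y_1)\sqcup Y_0$, the restriction of $\varphi$ to each boundary component is unchanged (so~(1) holds), and the infinite cyclic cover is literally the same space (so~(2) holds).

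\textit{Transitivity.} Given $\widetilde{H}$-cobordisms $(W,\varphi)$ from $(Y_0,\alpha_0)$ to $(Y_1,\alpha_1)$ and $(W',\varphi')$ from $(Y_1,\alpha_1)$ to $(Y_2,\alpha_2)$, set $V=W\cup_{Y_1}W'$, so that $\partial V=-Y_0\sqcup Y_2$. Since $H^1(Y_1;\mathbb{Z})\cong\mathbb{Z}$ and $\varphi|_{Y_1}$, $\varphi'|_{Y_1}$ are both the class dual to $\alpha_1$, they coincide, so Mayer--Vietoris in cohomology produces $\psi\in H^1(V;\mathbb{Z})$ with $\psi|_{W}=\varphi$ and $\psi|_{W'}=\varphi'$; condition~(1) for $(V,\psi)$ is then immediate. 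As $\psi$ is primitive (it restricts to a generator of $H^1(Y_0;\mathbb{Z})$), the cover $\widetilde{V}_{\psi}$ is connected, and naturality of covers — using that $\pi_1 W\to\mathbb{Z}$, $\pi_1 W'\to\mathbb{Z}$ and $\pi_1 Y_1\to\mathbb{Z}$ are all onto — identifies $\widetilde{V}_{\psi}=\widetilde{W}_{\varphi}\cup_{\widetilde{Y_1}}\widetilde{W'}_{\varphi'}$. The Mayer--Vietoris sequence
\[
\cdots\to H_n(\widetilde{Y_1};\mathbb{Q})\to H_n(\widetilde{W}_{\varphi};\mathbb{Q})\oplus H_n(\widetilde{W'}_{\varphi'};\mathbb{Q})\to H_n(\widetilde{V}_{\psi};\mathbb{Q})\to H_{n-1}(\widetilde{Y_1};\mathbb{Q})\to\cdots
\]
then finishes the argument: the two middle summands are finitely generated over $\mathbb{Q}$ by hypothesis, and $H_{*}(\widetilde{Y_1};\mathbb{Q})$ is finitely generated over $\mathbb{Q}$ by the homology-handle fact above (or by a ``half lives, half dies'' argument over $\mathbb{Q}(t)$ applied to condition~(2) for $W$), so $H_{*}(\widetilde{V}_{\psi};\mathbb{Q})$ is finitely generated over $\mathbb{Q}$, which is condition~(2). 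Hence $(V,\psi)$ is an $\widetilde{H}$-cobordism from $(Y_0,\alpha_0)$ to $(Y_2,\alpha_2)$.

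\textit{The main obstacle.} There is no deep difficulty here — the lemma is soft — but three points want care. First, one needs in hand the finiteness of the rational homology of the infinite cyclic cover of a homology handle: this is exactly what legitimizes the product cobordism in reflexivity and what feeds the Mayer--Vietoris estimate in transitivity. Second, the identification of $\widetilde{V}_{\psi}$ with the union of the infinite cyclic covers of $W$ and $W'$ relies on the glued class restricting compatibly (automatic from the Mayer--Vietoris construction) and on all the relevant restrictions being primitive, so that every cover in sight is connected. Third, one must track the orientations and induced boundary orientations throughout the reflexivity and symmetry steps. Everything else is routine bookkeeping.
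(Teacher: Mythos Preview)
Your proof is correct and follows the same three-part skeleton as the paper: symmetry is immediate, transitivity goes by Mayer--Vietoris on the glued cobordism, and the substance lies in reflexivity, where one must check that the infinite cyclic cover $\widetilde{Y}$ of a homology handle has finitely generated rational homology. Your Wang-sequence argument that $t_*-1$ is surjective on $H_1(\widetilde{Y};\mathbb{Q})$, hence that $H_1$ is $\mathbb{Q}[t,t^{-1}]$-torsion, is exactly what underlies the paper's citation of \cite[Assertion~5]{M1}. The one genuine difference is in handling $H_2(\widetilde{Y};\mathbb{Q})$: the paper invokes Kawauchi's partial Poincar\'e duality theorem \cite[Theorem~2.3]{K3} to identify $H_2(\widetilde{Y};\mathbb{Q})\cong H^0(\widetilde{Y};\mathbb{Q})\cong\mathbb{Q}$, whereas you use the more elementary observation that the $\mathbb{Q}[t,t^{-1}]$-rank Euler characteristic of $\widetilde{Y}$ equals $\chi(Y)=0$, forcing $H_2$ to be torsion as well. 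Your route avoids an external duality result at the cost of not pinning down $H_2$ exactly; the paper's route gives the precise value $H_2(\widetilde{Y};\mathbb{Q})\cong\mathbb{Q}$ but imports a nontrivial theorem. For the purposes of this lemma either is perfectly adequate.
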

\begin{proof}
The symmetry of the relation is trivial and the transitivity can be checked using the Mayer-Vietoris sequence. We verify reflexivity by showing that $H_i(\widetilde{Y}; \mathbb{Q})$ is finitely generated, where $Y$ is an oriented homology handle and $\widetilde{Y}$ is the covering space associated with a cohomology class in $H^1(Y;\mathbb{Z})$ dual to a generator of $H_1(Y;\mathbb{Z})$. In \cite[Proof of Assertion 5]{M1}, it is shown that if $H_1(Y; \mathbb{Q})\cong\mathbb{Q}$, then $H_1(\widetilde{Y}; \mathbb{Q})$ is finitely generated by using the Milnor exact sequence for the cover $\widetilde{Y}\to Y$. By the partial Poincar\'e duality theorem, see \cite[Theorem 2.3]{K3}, $H^0(\widetilde{Y}; \mathbb{Q})\cong H_2(\widetilde{Y}; \mathbb{Q})$ since $H_i(\widetilde{Y};\mathbb{Q})$ is finitely generated for $i=0, 1$. So, $H_2(\widetilde{Y}; \mathbb{Q})\cong \mathbb{Q}$.
\end{proof}

\begin{Lemma}\label{lem2}
If there is an orientation-preserving diffeomorphism $f:(Y_0,\alpha_0)\to(Y_1,\alpha_1)$ with $f_*(\alpha_0)=\alpha_1$, then $(Y_0,\alpha_0)\sim(Y_1,\alpha_1)$.
\end{Lemma}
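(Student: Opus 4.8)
The plan is to realize the required $\widetilde{H}$-cobordism as the product cobordism coming from $f$, and to pull back to it a class dual to $\alpha_1$ (equivalently to $\alpha_0$).

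First I would take $W$ to be the mapping cylinder of $f\colon Y_0\to Y_1$, smoothed to a compact, connected, oriented $4$-manifold. Since $f$ is an orientation-preserving diffeomorphism, $W$ is diffeomorphic to $Y_1\times[0,1]$, and with the product orientation one checks that $\partial W=-Y_0\sqcup Y_1$, the $Y_0$ boundary component being identified with $Y_1\times\{0\}$ through $f$. Let $r\colon W\to Y_1$ be the retraction; it is a homotopy equivalence, restricts to the identity on $Y_1$, and restricts on $Y_0$ to a map homotopic to $f$.

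Next I would define $\varphi$. Choose $\varphi_1\in H^1(Y_1;\mathbb{Z})$ dual to $\alpha_1$, meaning $\langle\varphi_1,\alpha_1\rangle=1$; such a class exists and is unique since $H^1(Y_1;\mathbb{Z})\cong\mathbb{Z}$. Set $\varphi:=r^*\varphi_1\in H^1(W;\mathbb{Z})$. Then $\varphi|_{Y_1}=\varphi_1$ is dual to $\alpha_1$, while $\varphi|_{Y_0}=f^*\varphi_1$ satisfies $\langle f^*\varphi_1,\alpha_0\rangle=\langle\varphi_1,f_*\alpha_0\rangle=\langle\varphi_1,\alpha_1\rangle=1$, so it is dual to $\alpha_0$; this gives condition (1). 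For condition (2), since $r_*\colon\pi_1(W)\to\pi_1(Y_1)$ is an isomorphism and $\varphi=r^*\varphi_1$, the infinite cyclic cover $\widetilde{W}_\varphi$ is the pullback along $r$ of the infinite cyclic cover $\widetilde{Y_1}\to Y_1$ associated to $\varphi_1$, hence is homotopy equivalent to $\widetilde{Y_1}$. By the computation in the proof of Lemma \ref{lem1}, $H_*(\widetilde{Y_1};\mathbb{Q})$ is finitely generated over $\mathbb{Q}$ in every degree, so the same holds for $H_*(\widetilde{W}_\varphi;\mathbb{Q})$. Thus $(W,\varphi)$ is an $\widetilde{H}$-cobordism between $(Y_0,\alpha_0)$ and $(Y_1,\alpha_1)$.

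I do not expect a genuine obstacle here. The only step requiring any care is the orientation bookkeeping in the first paragraph: one must check that orientation-preservation of $f$ is precisely what makes the product orientation on $W\cong Y_1\times[0,1]$ induce $\partial W=-Y_0\sqcup Y_1$ rather than $Y_0\sqcup Y_1$. The remaining verifications are formal, since being ``dual to $\alpha_i$'' is merely a condition on the Kronecker pairing and is transparently preserved by $f$ via the identity $\langle f^*x,a\rangle=\langle x,f_*a\rangle$.
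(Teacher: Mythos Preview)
Your proof is correct and follows essentially the same idea as the paper: build the required $\widetilde{H}$-cobordism as a cylinder on $Y_1$ (equivalently $Y_0$) using $f$, and invoke the reflexivity computation from Lemma~\ref{lem1} to verify condition~(2). The paper's version is slightly more roundabout---it glues two product cylinders $Y_0\times[0,1]$ and $Y_1\times[0,1]$ along collars via $f$ and then appeals to Mayer--Vietoris on the covers---whereas your mapping-cylinder argument with the retraction $r$ and the homotopy equivalence $\widetilde{W}_\varphi\simeq\widetilde{Y_1}$ is a cleaner packaging of the same content.
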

\begin{proof}
Let $W_0=Y_0\times[0,1]$ and $W_1=Y_1\times[0,1]$. In the proof of Lemma \ref{lem1}, we checked that $W_0$ and $W_1$ are $\widetilde{H}$-cobordisms. Let $N_0$ and $N_1$ be the collar neighborhoods of $Y_0\times1$ and $Y_1\times0$, respectively. Then $N_0\simeq Y_0\times(1-\epsilon,1]$ and $N_1\simeq Y_1\times[0,\epsilon)$. Define $$W=\frac{(W_0 \setminus (Y_0\times 1))\sqcup (W_1 \setminus (Y_0\times 0))}{(x,1-\theta) \sim (f(x),\theta)}$$ for $0<\theta<\epsilon$. It is clear that $W$ is a smooth 4-manifold with $\partial W=-Y_0\sqcup Y_1$. Moreover, the infinite cyclic covering $\widetilde{W}$ of $W$ associated with the dual of $\alpha_0$ (or $\alpha_1$) is the union of $\widetilde{W}_{0, \alpha_{0}^{*}}$ and $\widetilde{W}_{1, \alpha_{1}^{*}}$, where their intersection is $\widetilde{Y}_0\times (1-\epsilon,1)$ (or $\widetilde{Y}_1\times (0,\epsilon)$). From the Mayer-Vietoris sequence, the homology groups of $\widetilde{W}$ over $\mathbb{Q}$ are finitely generated since those of $\widetilde{W}_{0, \alpha_{0}^{*}}$ and $\widetilde{W}_{1, \alpha_{1}^{*}}$ are finitely generated, so $W$ is an $\widetilde{H}$-cobordism between $(Y_0,\alpha_0)$ and $(Y_1,\alpha_1)$.
\end{proof}

Using the above Lemma \ref{lem2}, we see that $(S^1\times S^2,\alpha_{\scriptscriptstyle S^1\times S^2})$, $(-(S^1\times S^2),\alpha_{\scriptscriptstyle S^1\times S^2})$, $(S^1\times S^2,-\alpha_{\scriptscriptstyle S^1\times S^2})$, and $(-(S^1\times S^2),-\alpha_{\scriptscriptstyle S^1\times S^2})$ are all $\widetilde{H}$-cobordant, where $\alpha_{\scriptscriptstyle S^1\times S^2}$ is the homology class of $S^1\times*$ with a fixed orientation. Indeed, there are obvious orientation-preserving diffeomorphisms between them.\

If a distinguished homology handle $(Y,\alpha)$ is $\widetilde{H}$-cobordant to $(S^1\times S^2, \alpha_{\scriptscriptstyle S^1\times S^2})$, then it is called {\em null $\widetilde{H}$-cobordant}.\

It can be easily checked that $(Y,\alpha)$ is null $\widetilde{H}$-cobordant if and only if there is a compact, connected, and oriented 4-manifold $W^+$ with $\partial W^+=Y$, and a class $\varphi\in H^1(W^+;\mathbb{Z})$ such that $\varphi_{|_{Y}}=\alpha^*$ and $H_*(\widetilde{W}^+_\varphi;\mathbb{Q})$ is finitely generated. In this case, $(W^+, \varphi)$ (or $W^+$) is called a {\em null $\widetilde{H}$-cobordism} of $(Y,\alpha)$ (or Y).

\begin{Def}
$\Omega(S^1\times S^2)$ is defined to be the set of all distinguished homology handles modulo the $\widetilde{H}$-cobordism relation. We will denote elements of $\Omega(S^1\times S^2)$ by $[(Y,\alpha)]$ and $[(S^1\times S^2, \alpha_{\scriptscriptstyle S^1\times S^2})]$ by $0$.
\end{Def}

Now, we introduce a group operation on $\Omega(S^1\times S^2)$. This operation is defined by round 1-handle attachment along curves representing the specified generators of $H_1$.\

In more detail, let $(Y_0,\alpha_0)$ and $(Y_1,\alpha_1)$ be distinguished homology handles. For each $i=0, 1$, choose a smoothly embedded simple closed oriented curve $\gamma_i$ in $Y_i$ such that $[\gamma_i]=\alpha_i$ in $H_1(Y_i; \mathbb{Z})$. Then there exists a closed connected orientable surface $F_i$ in $Y_i$ which intersects $\gamma_i$ in a single point. Let $\nu(\gamma_i)$ be a tubular neighborhood of $\gamma_i$. Then $\nu(\gamma_i)$ is diffeomorphic to $S^1\times B^2$. Choose smooth embeddings $$h_0:S^1\times B^2\times0\to Y_0,$$ $$h_1:S^1\times B^2\times1\to Y_1$$ for $\nu(\gamma_i)$ such that
\begin{enumerate}
\item there exist points $s\in S^1$ and $b\in\mathrm{Int}(B^2)$ with $h_i(s\times B^2\times i)\subset F_i$ and $h_i(S^1\times b\times i)=\gamma_i$,
\item $h_i$ is orientation reversing with respect to the orientation of $S^1\times B^2\times i$ induced from an orientation of $S^1\times B^2\times [0,1]$.
\end{enumerate}
Let $Y_{i}(\gamma_i)=Y_i\setminus\mathrm{Int}(\nu({\gamma_i}))$ and $\bar{h_{i}}=h_{i}|_{S^1\times\partial B^2\times i}$. Now, define $$Y_0\bigcirc Y_1\coloneqq Y_{0}(\gamma_0)\cup_{\bar{h_{0}}}(S^1\times\partial B^2\times [0,1])\cup_{\bar{h_{1}}}Y_{1}(\gamma_1).$$
We claim $Y_0\bigcirc Y_1$ is an oriented homology handle. To see this, let $\bar{b}\in\partial B^2$ and $\bar{\gamma_{i}}= h_i(S^1\times \bar{b}\times i)\subset Y_i$. We give an orientation to $\bar{\gamma_{i}}$ so that $[\bar{\gamma_{i}}]=[\gamma_i]$ in $H_1(Y_i;\mathbb{Z})$. Let $\mu_i=h_i(s\times \partial B^2 \times i)\subset Y_i$. We can check that $[\bar{\gamma_{i}}]$ is a generator of $H_1(Y_{i}(\gamma_i);\mathbb{Z})\cong\mathbb{Z}$ and $[\mu_i]=0$ in $H_1(Y_{i}(\gamma_i);\mathbb{Z})$ since $\mu_i$ bounds an orientable surface $F_i\setminus h_i(s\times \mathrm{Int}(B^2)\times i)$ in $Y_{i}(\gamma_i)$. From the Mayer-Vietoris sequence, we conclude that $H_1(Y_0\bigcirc Y_1;\mathbb{Z})\cong\mathbb{Z}$. Since $Y_0\bigcirc Y_1$ is orientable, its homology groups are isomorphic to those of $S^1\times S^2$ by Poincar\'e duality.\

From the above construction, we give an orientation to $Y_0\bigcirc Y_1$ induced by the orientation of $Y_0$ and $Y_1$ and the generator $\alpha$ of $H_1(Y_0\bigcirc Y_1 ; \mathbb{Z})$ can be specified by the homology class of $\bar{\gamma_{0}}$ or $\bar{\gamma_{1}}$, which are homologous in $Y_0\bigcirc Y_1$. 

\begin{Def}
For two distinguished homology handles $(Y_0,\alpha_0)$ and $(Y_1,\alpha_1)$, we define $(Y_0,\alpha_0)\bigcirc (Y_1,\alpha_1)$ to be the distinguished homology handle $(Y_0\bigcirc Y_1,\alpha)$ constructed as above and call it a {\em circle union} of $(Y_0,\alpha_0)$ and $(Y_1,\alpha_1)$.
\end{Def}

The circle union operation satisfies the following properties.

\begin{Prop}\cite{K1}\label{prop1}
\begin{enumerate}
\item $(Y_0,\alpha_0)\bigcirc (Y_1,\alpha_1)\sim(Y_0,\alpha_0)\bigcirc^{'}(Y_1,\alpha_1)$, where $\bigcirc, \bigcirc^{'}$ are circle unions with different choices of $\gamma$'s and $h$'s used above, i.e., $[(Y_0,\alpha_0)\bigcirc (Y_1,\alpha_1)]$ is well-defined.
\item $(Y_0,\alpha_0)\sim(Y_1,\alpha_1)$ if and only if $(Y_0,\alpha_0)\bigcirc (-Y_1,\alpha_1)$ is null $\widetilde{H}$-cobordant. 
\item If $(Y_0,\alpha_0)$ and $(Y_1,\alpha_1)$ are null $\widetilde{H}$-cobordant, then $(Y_0,\alpha_0)\bigcirc (Y_1,\alpha_1)$ is null $\widetilde{H}$-cobordant.
\end{enumerate}
\end{Prop}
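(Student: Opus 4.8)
The plan is to exploit the fact, already emphasized in the definition of the circle union, that $(Y_0,\alpha_0)\bigcirc(Y_1,\alpha_1)$ is produced by a round $1$-handle attachment, and to check that such an attachment is harmless for the finiteness condition defining $\widetilde{H}$-cobordism. Concretely, for the construction with chosen data $\gamma_i, h_i, F_i$ I would introduce the cobordism
\[
C=\bigl((Y_0\sqcup Y_1)\times[0,1]\bigr)\cup\bigl(S^1\times D^1\times D^2\bigr),
\]
where the round $1$-handle is glued to $(Y_0\sqcup Y_1)\times\{1\}$ along $\nu(\gamma_0)\sqcup\nu(\gamma_1)$ by means of $h_0$ and $h_1$. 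This is a connected, oriented $4$-manifold with $\partial C=-(Y_0\sqcup Y_1)\sqcup(Y_0\bigcirc Y_1)$. First I would check, by Mayer--Vietoris, that $H_1(C;\mathbb{Z})\cong\mathbb{Z}$, generated by the core $S^1$ of the handle, which is homologous in $C$ to both $\gamma_0$ and $\gamma_1$; here the orientation-reversing hypothesis~(2) on the $h_i$ is precisely what forces $\gamma_0$ and $\gamma_1$ to be coherently oriented, so that the positive generator $\varphi_C\in H^1(C;\mathbb{Z})$ restricts to $\alpha_0^{*}$, $\alpha_1^{*}$ on the two $Y_i$ and to the distinguished dual class on $Y_0\bigcirc Y_1$. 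The crucial point is that $H_*(\widetilde{C}_{\varphi_C};\mathbb{Q})$ is finitely generated: by naturality of covers, $\widetilde{C}_{\varphi_C}$ splits as $(\widetilde{Y_0}\sqcup\widetilde{Y_1})\times[0,1]$ glued to $\widetilde{S^1\times D^1\times D^2}_{\varphi_C}=\mathbb{R}\times D^1\times D^2\simeq\mathrm{pt}$ along the contractible covers $\mathbb{R}\times D^2$ of $\nu(\gamma_0)$ and $\nu(\gamma_1)$, so a Mayer--Vietoris argument reduces the claim to finite generation of $H_*(\widetilde{Y_i};\mathbb{Q})$, which is exactly what the reflexivity part of Lemma~\ref{lem1} establishes. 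In effect, $C$ is an ``$\widetilde{H}$-cobordism'' from $Y_0\sqcup Y_1$ to $Y_0\bigcirc Y_1$.

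Granting this, part~(3) follows immediately. If $W_0^{+}$ and $W_1^{+}$ are null $\widetilde{H}$-cobordisms of $(Y_0,\alpha_0)$ and $(Y_1,\alpha_1)$, I would glue $W_0^{+}\sqcup W_1^{+}$ to $C$ along $Y_0\sqcup Y_1$. The round handle makes the result $W^{+}$ connected, $\partial W^{+}=Y_0\bigcirc Y_1$; the cohomology classes agree on $Y_0\sqcup Y_1$ (both are $\alpha_0^{*}\sqcup\alpha_1^{*}$) and glue to a class restricting to the distinguished dual on the boundary; and one further Mayer--Vietoris, using finiteness of $\widetilde{W_0^{+}}$, $\widetilde{W_1^{+}}$ and of $\widetilde{C}$, shows $H_*(\widetilde{W^{+}};\mathbb{Q})$ is finitely generated. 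Hence $(Y_0,\alpha_0)\bigcirc(Y_1,\alpha_1)$ is null $\widetilde{H}$-cobordant.

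For part~(2) I would run the same gluing with $C$ now the round-handle cobordism for the circle union of $(Y_0,\alpha_0)$ and $(-Y_1,\alpha_1)$, which has $\partial C=(-Y_0\sqcup Y_1)\sqcup(Y_0\bigcirc(-Y_1))$. If $W$ is an $\widetilde{H}$-cobordism between $Y_0$ and $Y_1$, then $-W$ has boundary $Y_0\sqcup(-Y_1)$, and gluing $-W$ to $C$ along it yields a connected $W^{+}$ with $\partial W^{+}=Y_0\bigcirc(-Y_1)$, which is a null $\widetilde{H}$-cobordism by the same cohomology/Mayer--Vietoris analysis. Conversely, given a null $\widetilde{H}$-cobordism $W^{+}$ of $Y_0\bigcirc(-Y_1)$, gluing $W^{+}$ to $-C$ along $Y_0\bigcirc(-Y_1)$ and reversing orientation produces an $\widetilde{H}$-cobordism between $Y_0$ and $Y_1$, so $(Y_0,\alpha_0)\sim(Y_1,\alpha_1)$. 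In both directions the cohomology classes automatically match on the gluing hypersurface, since each side restricts there to the distinguished dual class.

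Finally, for part~(1): two choices of data give circle unions $Y_0\bigcirc Y_1$ and $Y_0\bigcirc^{'}Y_1$ with round-handle cobordisms $C$ and $C'$, both running from $Y_0\sqcup Y_1$; I would glue $C$ to $-C'$ along $Y_0\sqcup Y_1$. Because $\varphi_C$ and $\varphi_{C'}$ both restrict there to $\alpha_0^{*}\sqcup\alpha_1^{*}$, they glue, and after reversing orientation the result is an $\widetilde{H}$-cobordism between $(Y_0\bigcirc Y_1,\alpha)$ and $(Y_0\bigcirc^{'}Y_1,\alpha')$, with finite generation of the cover's $\mathbb{Q}$-homology once more by Mayer--Vietoris from $\widetilde{C}$ and $\widetilde{C'}$. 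I expect the genuine work here to be bookkeeping rather than ideas: one must verify carefully that $C$ has exactly the asserted boundary and cohomology class for \emph{every} admissible choice of $\gamma_i, h_i, F_i$ --- in particular that conditions~(1) and~(2) on the $h_i$ pin down the correct signs, so that $\varphi_C$ restricts to $\alpha_i^{*}$ and not $-\alpha_i^{*}$ on each end --- and that all of the orientation conventions are consistent enough for these gluings to be legitimate.
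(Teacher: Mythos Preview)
Your argument is correct and, for part~(1), coincides with the paper's sketch: the manifold $C\cup_{Y_0\sqcup Y_1}(-C')$ you build is exactly the $W$ the paper obtains by attaching round $1$-handles at both ends of $(Y_0\sqcup Y_1)\times[0,1]$, and the finiteness check is the same Mayer--Vietoris computation. The paper does not prove (2) or (3), referring instead to \cite{K1}, so there your modular use of the round-handle cobordism $C$ goes beyond what the paper records, but it is in the same spirit as Kawauchi's original arguments.
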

We sketch the proof of (1). For more details see \cite{K1}.
\begin{proof}[sketch of the proof of (1)]\
Given distinguished homology handles $(Y_0,\alpha_0)$ and $(Y_1, \alpha_1)$, we start with product cobordisms $Y_0\times[0,1]$ and $Y_1\times[0,1]$. At the level $t=1$, we attach a 4-dimensional round 1-handle $S^1\times B^2\times [0,1]$ along $\gamma$'s using $h$'s we chose above. At the level $t=0$, however, we choose different curves and embeddings from $\gamma$'s and $h$'s, and attach another round 1-handle $S^1\times B^2\times [0,1]$ along them. Let $W$ denote the 4-dimensional manifold obtained by attaching two round 1-handles to $Y_0\times [0,1] \sqcup Y_1\times [0,1]$. Then it is clear that $\partial W = Y_0\bigcirc Y_1 \sqcup -(Y_0\bigcirc^{'} Y_1)$, where $Y_0\bigcirc^{'} Y_1$ is a circle union of $Y_0$ and $Y_1$ using the different curves and embeddings chosen at level $t=0$. See Figure \ref{well} for a schematic picture of $W$. 
\begin{figure}
 \centering
 \includegraphics[scale=0.13]{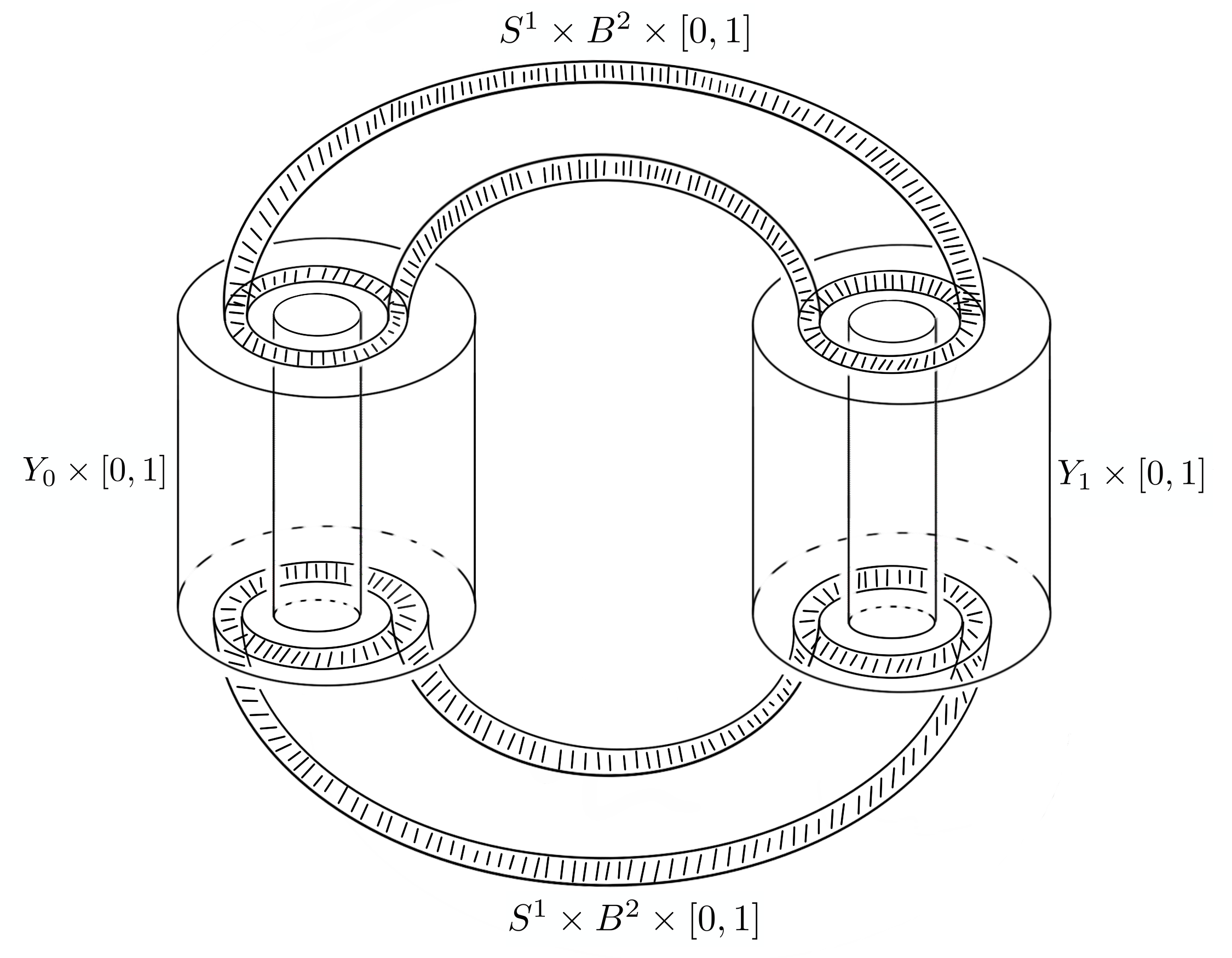}
 \caption{}
 \label{well}
\end{figure}
By using the Mayer-Vietoris sequence, we can easily check that $W$ is a $\widetilde{H}$-cobordism between $Y_0\bigcirc Y_1$ and $Y_0\bigcirc^{'} Y_1$.
\end{proof}

\begin{Rem}
The circle union of homology handles depends on the choices of curves representing specified generators and embeddings, and different choices can yield different homology handles, up to diffeomorphisms. The circle union operation, however, is well-defined under $\widetilde{H}$-cobordism by Proposition \ref{prop1}(1). One might wonder whether this operation is well-defined under $\mathbb{Z}$-homology cobordism. Since the $\widetilde{H}$-cobordism $W$ constructed in the proof of Proposition \ref{prop1}(1) has one more $\mathbb{Z}$-summand in $H_1(W;\mathbb{Z})$, it is not a $\mathbb{Z}$-homology cobordism.
\end{Rem}

Proposition \ref{prop1} leads to the following theorem.
\begin{Th}\cite[Theorem 1.9]{K1}
The set  $\Omega(S^1\times S^2)$ is an abelian group under the sum $[(Y_0,\alpha_0)]+[(Y_1,\alpha_1)]=[(Y_0,\alpha_0)\bigcirc (Y_1,\alpha_1)]$, with identity $0=[(S^1\times S^2,\alpha_{\scriptscriptstyle S^1\times S^2})]$. The inverse $-[(Y,\alpha)]$ of $[(Y,\alpha)]$ is $[(-Y,\alpha)]$.
\end{Th}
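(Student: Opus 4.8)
The plan is to verify, one axiom at a time, that $(\Omega(S^1\times S^2),\bigcirc)$ is an abelian group, using Proposition \ref{prop1} for the well-definedness statements and Lemma \ref{lem2} to promote an orientation-preserving diffeomorphism respecting distinguished generators into an $\widetilde{H}$-cobordism. Closure and the fact that the operation descends to $\widetilde{H}$-cobordism classes are already in hand: the construction preceding the theorem shows $Y_0\bigcirc Y_1$ is an oriented homology handle, Proposition \ref{prop1}(1) removes the dependence on the auxiliary curves $\gamma_i$ and parametrizations $h_i$, and Lemma \ref{lem2} (applied to the boundary pieces) removes the dependence on the $\widetilde{H}$-cobordism classes of the $(Y_i,\alpha_i)$. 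Moreover $(-Y,\alpha)$ is again a distinguished homology handle, since reversing orientation does not change homology, so $[(-Y,\alpha)]$ makes sense.

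\emph{Commutativity.} The definition of $Y_0\bigcirc Y_1$ is symmetric in the two factors: it glues $Y_0(\gamma_0)$ and $Y_1(\gamma_1)$ to the two ends of the collar $S^1\times\partial B^2\times[0,1]$. Reflecting this collar by $(x,y,t)\mapsto(x,y,1-t)$ — composed, if necessary, with a reflection in the $B^2$-direction so that the orientation conventions (2) on the $h_i$ are respected — produces an orientation-preserving diffeomorphism $Y_0\bigcirc Y_1\to Y_1\bigcirc Y_0$ matching the distinguished generators, so Lemma \ref{lem2} gives $[(Y_0,\alpha_0)]+[(Y_1,\alpha_1)]=[(Y_1,\alpha_1)]+[(Y_0,\alpha_0)]$. \emph{Associativity.} Here I would exploit the freedom of Proposition \ref{prop1}(1) to pick convenient curves. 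To build $(Y_0\bigcirc Y_1)\bigcirc Y_2$, represent the distinguished generator of $Y_0\bigcirc Y_1$ by a parallel push-off of $\bar\gamma_1$ into the interior of the $Y_1(\gamma_1)$-piece; deleting its neighborhood amounts to removing a neighborhood of two parallel copies of $\gamma_1$ from $Y_1$, so the triple union is $Y_0(\gamma_0)$, the complement in $Y_1$ of a neighborhood of two parallel copies of $\gamma_1$, and $Y_2(\gamma_2)$, glued along collars. Performing $Y_0\bigcirc(Y_1\bigcirc Y_2)$ with the symmetric choice (represent the generator of $Y_1\bigcirc Y_2$ by a push-off of $\bar\gamma_1$ in its $Y_1$-piece) yields the same three pieces with the same gluing data, hence an orientation-preserving diffeomorphism matching distinguished generators; Lemma \ref{lem2} then gives associativity.

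\emph{Identity.} Take $Y_1=S^1\times S^2$ with $\gamma_1=S^1\times\ast$. Then $Y_1(\gamma_1)=(S^1\times S^2)\setminus\nu(S^1\times\ast)$ is a solid torus whose meridian is the meridian $\mu_1$ of $\gamma_1$. Conditions (1) on $h_1$ force the gluing in $Y_0\bigcirc(S^1\times S^2)$ to identify the meridian of $\gamma_0$ with $\mu_1$, i.e. to re-attach this solid torus to $Y_0(\gamma_0)$ exactly as $\nu(\gamma_0)$ sat inside $Y_0$. Hence $Y_0\bigcirc(S^1\times S^2)$ is orientation-preservingly diffeomorphic to $Y_0$ by a map sending $\bar\gamma_0$ to $\gamma_0$, and Lemma \ref{lem2} gives $[(Y_0,\alpha_0)]+0=[(Y_0,\alpha_0)]$. \emph{Inverses.} Apply Proposition \ref{prop1}(2) with $(Y_0,\alpha_0)=(Y_1,\alpha_1)=(Y,\alpha)$: since $(Y,\alpha)\sim(Y,\alpha)$ by reflexivity (Lemma \ref{lem1}), $(Y,\alpha)\bigcirc(-Y,\alpha)$ is null $\widetilde{H}$-cobordant, i.e. $[(Y,\alpha)]+[(-Y,\alpha)]=0$. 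Combined with commutativity this shows $-[(Y,\alpha)]=[(-Y,\alpha)]$, completing the verification.

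\textbf{Main obstacle.} The delicate step is associativity: one must arrange the two iterated circle unions to be \emph{literally} diffeomorphic, which forces a careful choice of the auxiliary curve in the middle factor $Y_1$ (taking a parallel push-off, so that $Y_1$ ends up with two parallel solid tori deleted) together with careful tracking of the orientation conventions (2), so that the resulting diffeomorphism is orientation-preserving and carries distinguished generator to distinguished generator. The identity-element computation requires the same kind of, but considerably milder, orientation bookkeeping; commutativity and inverses are then essentially formal given Lemma \ref{lem2} and Proposition \ref{prop1}.
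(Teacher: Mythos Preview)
The paper does not actually prove this theorem: it cites Kawauchi and only records that ``Proposition \ref{prop1} leads to the following theorem.'' Your proposal is a reasonable fleshing-out of that hint, and your verifications of commutativity, associativity, the identity element, and inverses are along the right lines and use exactly the ingredients the paper makes available (explicit diffeomorphisms promoted via Lemma \ref{lem2}, together with Proposition \ref{prop1}).

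There is, however, one genuine slip. You claim that ``Lemma \ref{lem2} (applied to the boundary pieces) removes the dependence on the $\widetilde{H}$-cobordism classes of the $(Y_i,\alpha_i)$.'' Lemma \ref{lem2} only turns an orientation-preserving diffeomorphism into an $\widetilde{H}$-cobordism; it says nothing about the situation where $(Y_0,\alpha_0)\sim(Y_0',\alpha_0')$ are merely $\widetilde{H}$-cobordant rather than diffeomorphic, which is precisely what you need to show the sum is well-defined on $\Omega(S^1\times S^2)$. The correct argument uses Proposition \ref{prop1}(2) and (3) instead. Once you have established associativity, commutativity, and $-(Y_0'\bigcirc Y_1)\cong(-Y_0')\bigcirc(-Y_1)$ at the level of diffeomorphisms (and hence, via Lemma \ref{lem2}, at the level of $\widetilde{H}$-cobordism), you get
\[
(Y_0\bigcirc Y_1)\bigcirc\bigl(-(Y_0'\bigcirc Y_1)\bigr)\ \sim\ (Y_0\bigcirc -Y_0')\bigcirc(Y_1\bigcirc -Y_1).
\]
Both factors on the right are null $\widetilde{H}$-cobordant by Proposition \ref{prop1}(2), so the whole expression is null by Proposition \ref{prop1}(3), and applying (2) once more gives $(Y_0\bigcirc Y_1)\sim(Y_0'\bigcirc Y_1)$. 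In short, well-definedness on classes is a \emph{consequence} of the other axioms together with Proposition \ref{prop1}(2)(3), not something Lemma \ref{lem2} can provide on its own.
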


Next, we define the zero-surgery homomorphism $\omega$ from the knot concordance group $\mathcal{C}$ to $\Omega(S^1\times S^2)$.\

For any oriented knot $K\subset S^3$, let $S_{0}^{3}(K)$ be the closed 3-manifold obtained from 0-surgery along a knot $K$. It is easily checked that $S_{0}^{3}(K)$ is an oriented homology handle, i.e., the homology groups of $S_{0}^{3}(K)$ are isomorphic to those of $S^1\times S^2$. We give an orientation to the meridian $m$ so that the linking number with $K$ is +1. Then the homology class $[m]$ represents a generator of $H_1(S_{0}^{3}(K);\mathbb{Z})$. We define $\omega(K)$ to be the distinguished homology handle $(S_{0}^{3}(K),[m])$. Sometimes, we write $S_0^3(K)$ for $\omega(K)=(S_0^3(K),[m])$ as the generator $[m]$ is well-understood.

\begin{Lemma}\cite[Lemma 2.4]{K1}
The map $\omega$ from the set of knots to the set of distinguished homology handles induces a homomorphism from the knot concordance group $\mathcal{C}$ to $\Omega(S^1\times S^2)$, i.e., $$(S_0^3(K_1\#K_2), [m_0]=[m_1]) \sim (S_0^3(K_1),[m_0])\bigcirc (S_0^3(K_2),[m_1]),$$ where $m_i$ is the meridian of a knot $K_i$ for each $i=0, 1$. 
\end{Lemma}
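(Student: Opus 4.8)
The plan is to produce an explicit $\widetilde{H}$-cobordism $W$ between $S^3_0(K_1\# K_2)$ and $S^3_0(K_1)\bigcirc S^3_0(K_2)$, built from a standard cobordism relating $0$-surgery on a connected sum to a pair of surgeries on the summands. First I would recall the well-known picture: $S^3_0(K_1\# K_2)$ can be obtained from $S^3_0(K_1)\sqcup S^3_0(K_2)$ by an appropriate surgery/handle move reflecting the band sum that forms $K_1\# K_2$. Concretely, starting from $\bigl(S^3_0(K_1)\sqcup S^3_0(K_2)\bigr)\times[0,1]$, one attaches a round $1$-handle $S^1\times B^2\times[0,1]$ along curves representing the meridians $[m_0]$ and $[m_1]$, chosen (together with their framings/embeddings $h_i$) exactly as in the definition of the circle union; the result is by definition a cobordism from $S^3_0(K_1)\bigcirc S^3_0(K_2)$ to the $3$-manifold at the top. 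The geometric claim to verify is that this top manifold is diffeomorphic to $S^3_0(K_1\# K_2)$, carrying the meridian class $[m_0]=[m_1]$ to $[m]$; this is a Kirby-calculus exercise — sliding the $0$-framed unknot of the round handle so that the two $0$-framed knots $K_1,K_2$ get banded together into $K_1\# K_2$ — and it is exactly the surgery-theoretic incarnation of connected sum.

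Next I would verify the two conditions in the definition of $\widetilde{H}$-cobordism for this $W$. Condition (1), that $\varphi|_{Y_i}$ is dual to the distinguished generator, is immediate from the construction: the round handle is attached along the meridian classes, so the obvious $\varphi\in H^1(W;\mathbb{Z})$ restricts to $[m_0]^*$, $[m_1]^*$, and hence (after the diffeomorphism above) to $[m]^*$ on $S^3_0(K_1\# K_2)$. Condition (2), finite generation over $\mathbb{Q}$ of the homology of the infinite cyclic cover $\widetilde{W}_\varphi$, I would check with the Mayer–Vietoris sequence, decomposing $\widetilde{W}_\varphi$ into the infinite cyclic covers of the two product pieces $S^3_0(K_i)\times[0,1]$ and of the round handle; the covers of the product pieces have finitely generated rational homology because $S^3_0(K_i)$ is a homology handle (this is precisely the reflexivity computation in the proof of Lemma \ref{lem1}), and the cover of $S^1\times B^2\times[0,1]$ — whose core circle maps to a generator of $H_1(W)$ — is contractible-over-$\mathbb{Q}$ in the relevant range, so Mayer–Vietoris gives finite generation for $\widetilde{W}_\varphi$.

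The main obstacle is the geometric identification in the first paragraph: making precise, and orientation-compatibly, the claim that attaching the round $1$-handle along the two meridians converts the disjoint $0$-surgeries into the $0$-surgery on the connected sum, with the distinguished generators matching up. One must be careful that the embeddings $h_i$ required in the definition of the circle union (orientation-reversing on $S^1\times B^2\times\{i\}$, meeting the dual surfaces $F_i$ correctly) can indeed be chosen so that the resulting band move on the surgery diagram is the connected-sum band; since the circle union is well-defined up to $\widetilde{H}$-cobordism by Proposition \ref{prop1}(1), any convenient choice suffices, which removes the only real subtlety. Everything else is routine Mayer–Vietoris bookkeeping of the kind already carried out in Lemmas \ref{lem1} and \ref{lem2}.
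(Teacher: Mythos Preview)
Your proposal has two genuine gaps.

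First, the displayed relation $(S_0^3(K_1\#K_2),[m])\sim (S_0^3(K_1),[m_0])\bigcirc(S_0^3(K_2),[m_1])$ is in fact a \emph{diffeomorphism}, not merely an $\widetilde{H}$-cobordism, and the paper dispatches it in one line: the exterior $X(K_1\#K_2)$ is obtained by gluing $X(K_1)$ and $X(K_2)$ along annular neighborhoods of their meridians, so after $0$-filling one sees $S_0^3(K_1\#K_2)=S_0^3(K_1)\bigcirc S_0^3(K_2)$ on the nose (for a suitable choice of curves and embeddings in the circle union). Your round-handle cobordism is therefore unnecessary. More to the point, as you describe it the construction is mislabeled: attaching a round $1$-handle to the \emph{top} of $\bigl(S_0^3(K_1)\sqcup S_0^3(K_2)\bigr)\times[0,1]$ produces a cobordism whose \emph{incoming} boundary is still the disconnected manifold $S_0^3(K_1)\sqcup S_0^3(K_2)$ and whose \emph{outgoing} boundary is the circle union. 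It is not ``by definition a cobordism from $S_0^3(K_1)\bigcirc S_0^3(K_2)$ to the $3$-manifold at the top''; rather, the circle union \emph{is} the top. Your subsequent Kirby-calculus identification of that top with $S_0^3(K_1\#K_2)$ is exactly the diffeomorphism the paper asserts directly, and once you have it there is nothing further to cobord --- the Mayer--Vietoris check you outline is verifying the wrong thing.

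Second, and more seriously, you have omitted the main content of the lemma. To say that $\omega$ induces a homomorphism from the \emph{concordance} group $\mathcal{C}$ one must show it is well-defined on concordance classes, i.e.\ that slice knots map to $0\in\Omega(S^1\times S^2)$. The paper's proof is almost entirely devoted to this: for $K$ slice with slice disk $D^2\subset B^4$, set $W=B^4\setminus\mathrm{Int}(\nu(D^2))$; Alexander duality gives $H_*(W;\mathbb{Z})\cong H_*(S^1;\mathbb{Z})$ and $\partial W=S_0^3(K)$, and then Milnor's result (Assertion~5 of \cite{M1}) shows the infinite cyclic cover $\widetilde{W}$ has finitely generated rational homology, so $W$ is a null $\widetilde{H}$-cobordism. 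Nothing in your proposal addresses this step, and without it the map $\omega$ is only defined on isotopy classes of knots, not on $\mathcal{C}$.
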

\begin{proof}
Let $K_1$ and $K_2$ be knots in $S^3$. Then the exterior $X(K_1\#K_2)$ of the connected sum of $K_1$ and $K_2$ is the quotient space of the exteriors $X(K_1)$ and $X(K_2)$ of $K_1$ and $K_2$, respectively, formed by identifying annular neighborhoods of their meridians. So, $S_0^3(K_1\#K_2)=S_0^3(K_1)\bigcirc S_0^3(K_2)$. Hence, it is sufficient to show that if $K$ is a slice knot, then $(S_0^3(K), [m])$ is null $\widetilde{H}$-cobordant. Let $B^4$ be a 4-ball with $K$ in $S^3=\partial B^4$. Since $K$ is slice, there is a smoothly embedded disk $D^2$ in $B^4$ such that $\partial D^2=K\subset\partial B^4$. Let $W=B^4\setminus\mathrm{Int}(\nu(D^2))$, where $\nu(D^2)$ is a closed tubular neighborhood of $D^2$ in $B^4$. Then $W$ has the homology of a circle by Alexander duality. Moreover, $\partial W$ is $S_0^3(K)$. Since the map $i_*:H_1(\partial W;\mathbb{Z})\to H_1(W;\mathbb{Z})$ induced by inclusion is an isomorphism, we can choose a generator $i_*([m])$ of $H_1(W;\mathbb{Z})$, where $m$ is a meridian of $K$ with linking number +1 with $K$. By \cite[Assertion 5]{M1}, the infinite cyclic covering $\widetilde{W}$ of $W$ associated with the dual of $i_*([m])$ has finitely generated homology over $\mathbb{Q}$ since $W$ has the homology of $S^1$. Thus, $(S_0^3(K),[m])$ is null $\widetilde{H}$-cobordant.
\end{proof}

\subsection{Knot concordance invariants from knot Floer homology}\label{KCI}
We now briefly discuss the knot concordance invariants $\Upsilon$, $\tau$ and $\{V_i | i \in \mathbb{Z}\}$ without giving the definitions in detail. These are all derived from knot Floer homology. For introductions and details, see \cite{BCG}, \cite{C}, \cite{HW}, \cite{H}, \cite{L}, \cite{Li}, \cite{CM}, \cite{OSS}, \cite{OS} and \cite{R}.\\

In \cite{OS}, Ozsv\'ath and  Szab\'o defined the {\em tau} invariant $\tau$, which is a group homomorphism from $\mathcal{C}$ to $\mathbb{Z}$, i.e., $\tau(K_{1} \# K_2) = \tau(K_1) + \tau(K_2)$ and $\tau(K)=0$ for any slice knot $K$.

\begin{Th}\cite{L}\label{L1}
Let $P$ be the Mazur pattern shown in Figure \ref{PC}. If $\tau(K)>0$, then $\tau(P(K))=\tau(K)+1$.
\end{Th}
\smallskip
\begin{Rem}
Theorem \ref{L1} shows that for {\em any} knot $K$ with $\tau(K)>0$, $K$ is not concordant to $P(K)$. If one merely wants to find examples of knots for which $\tau(P(K))=\tau(K)+1$, one can appeal to the slice-Bennequin inequality satisfied by $\tau$ \cite{P}. For details, see \cite[Theorem 3.1 and Corollary 3.2]{CFHH}.
\end{Rem}
\smallskip
The definition of a satellite operation $P(K)$ will be given in Section \ref{s}.\\

In \cite{OSS}, Ozsv\'ath, Stipsicz and Szab\'o introduced the {\em Upsilon} invariant $\Upsilon$. This is a homomorphism $\Upsilon:\mathcal{C}\to PL([0,2],\mathbb{R})$, $K\mapsto\Upsilon_{K}(t)$, $\Upsilon_K:[0,2]\to\mathbb{R}$, where $PL([0,2],\mathbb{R})$ is the group of piecewise-linear functions on $[0,2]$.

\begin{Th}\cite{OSS}\label{OS}
The invariants $\Upsilon_K(t)$ bound the slice genus of $K$. Indeed, for $0\le t\le1$,  $\abs{\Upsilon_K(t)}\le tg_s(K)$.
\end{Th}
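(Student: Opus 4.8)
The plan is to deduce the bound from a more general statement about the behavior of $\Upsilon$ under genus-$g$ cobordisms and then to specialize to a minimal-genus slice surface. Concretely, I would first establish the cobordism inequality: if $\Sigma\subset S^3\times[0,1]$ is a smoothly embedded, connected, oriented cobordism of genus $g$ from a knot $K_0$ to a knot $K_1$, then for $0\le t\le 1$ one has $\abs{\Upsilon_{K_0}(t)-\Upsilon_{K_1}(t)}\le tg$. Granting this, the theorem follows at once: a minimal-genus smooth surface for $K$ in $B^4$ can be regarded, after deleting a small ball around an interior point and using the resulting collar, as a connected genus-$g_s(K)$ cobordism from $K$ to the unknot $U$. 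Since $\Upsilon_U(t)\equiv 0$, the cobordism inequality yields $\abs{\Upsilon_K(t)}\le t\,g_s(K)$ on $[0,1]$.

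To prove the cobordism inequality I would recall that $\Upsilon_K(t)$ is the numerical invariant extracted from the $t$-modified knot Floer homology $tHFK^-(K)$ as the maximal $t$-grading carried by the image of the distinguished class (a $\nu^+$-type threshold). I would then fix a Morse function on $\Sigma$, decompose the cobordism into a finite sequence of elementary pieces -- births (split-off unknots), index-one saddles (band moves), and deaths -- and analyze the map each elementary piece induces on $tHFK^-$. Births and deaths shift the relevant $t$-grading by a fixed, computable amount, while a single saddle shifts it by a bounded amount; the essential estimate is that, for $t\in[0,1]$, each handle contributing to the genus changes the invariant by at most $t$ in absolute value. Tracking how the Euler characteristic $\chi(\Sigma)=-2g$ is distributed among these elementary cobordisms then produces the total bound $tg$.

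The main obstacle is precisely this local grading computation: one must pin down the exact grading shift of the cobordism-induced maps on the $t$-modified complex and verify that the merge and split saddles combine so that a genus-$g$ cobordism costs at most $tg$ (rather than twice that) for $t$ in $[0,1]$. This is where the restriction $t\le 1$ is genuinely used -- outside this range the per-saddle estimate degrades and the clean linear bound fails. A cleaner but essentially equivalent route would bypass the explicit saddle analysis by relating $\Upsilon_K(t)$ to the correction terms $d$ of large surgeries on $K$ and invoking the standard $d$-invariant inequality for the negative-definite cobordism obtained by surgering along $\Sigma$, with the genus entering through the intersection form. Either way, the concordance invariance of $\Upsilon$ guarantees that only the minimal genus $g_s(K)$ matters, which is also what makes the inequality sharp, as the computation $\Upsilon_{T_{2,3}}(t)=-t$ on $[0,1]$ illustrates.
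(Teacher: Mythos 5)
First, a calibration point: the paper does not prove this statement at all---it is quoted directly from Ozsv\'ath--Stipsicz--Szab\'o \cite{OSS} (see also \cite{Li}), so there is no internal proof to compare against; your proposal can only be measured against what a complete argument requires. Your reduction step is correct and is indeed the standard one: puncture a minimal-genus slice surface at an interior point to obtain a connected genus-$g_s(K)$ cobordism in $S^3\times[0,1]$ from $K$ to the unknot $U$, note $\Upsilon_U(t)\equiv 0$, and invoke a cobordism inequality $\abs{\Upsilon_{K_0}(t)-\Upsilon_{K_1}(t)}\le tg$ for $0\le t\le 1$. That inequality is true and is the key lemma in the literature.

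The problem is that everything hard in your write-up is deferred rather than done. You yourself identify the per-handle grading estimate as ``the main obstacle'' and then never carry it out; a proof whose central lemma is flagged as an obstacle and left unverified is a plan, not a proof. Moreover, the plan as stated does not quite parse: an index-one saddle changes the number of components by one, so the intermediate stages of your Morse decomposition are two-component links, and $\Upsilon$ in the $tCFK^\infty$/$tHFK^-$ framework is defined only for knots. To make the elementary-piece analysis meaningful you must either extend $\Upsilon$ to links (e.g.\ via knotification), or group the saddles in pairs so that each genus-one piece goes knot-to-knot, or abandon the Morse-theoretic route for the surgery/$d$-invariant argument you mention at the end---which is itself only gestured at, since it needs the large-surgery formula for the $t$-modified theory and explicit grading-shift formulas for the cobordism maps. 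Finally, your bookkeeping has an unresolved factor of two: a connected genus-$g$ cobordism between knots with no births or deaths has $\chi=-2g$ and hence $2g$ saddles, so a per-saddle cost of $t$ yields only $2tg$; the sharp bound $tg$ requires showing the cost is $t$ per \emph{pair} of saddles, which is exactly the cancellation you acknowledge but do not prove. Until the local estimate is established, with the link issue resolved and the factor of two accounted for, the proposal does not establish the theorem.
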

\smallskip
\begin{Th}\cite{OSS}\label{OSS} {\rm(c.f. \cite{Li})}
 The invariant $\Upsilon$ has the following properties:
 \begin{enumerate}
 \item $\Upsilon_K(2-t) = \Upsilon_K(t)$.
 \item $\Upsilon_K(0) = 0$.
 \item $\Upsilon_{K}'(0) = -\tau(K)$.
 \item $\Upsilon_{K_1\#K_2}(t)=\Upsilon_{K_1}(t)+\Upsilon_{K_2}(t)$.
 \item $\Upsilon_{-K}(t)=-\Upsilon_K(t)$.
 \item There are only finitely many singularities of $\Upsilon_K(t)$.
 \item The derivative of $\Upsilon_K(t)$, where it exists, is an integer.
 \end{enumerate}
\end{Th}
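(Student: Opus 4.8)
The plan is to derive all seven properties from the definition of $\Upsilon_K(t)$ in terms of a one-parameter family of filtrations on the full knot Floer complex $CFK^\infty(K)$. Recall that $CFK^\infty(K)$ is a finitely generated, free chain complex over $\mathbb{F}[U,U^{-1}]$ (with $\mathbb{F}=\mathbb{Z}/2$) whose generators $[x,i,j]$ carry a Maslov grading together with two integers $i$ and $j$ satisfying $j-i=A(x)$, the Alexander grading, and on which $U$ acts by $[x,i,j]\mapsto[x,i-1,j-1]$; its homology is $\mathbb{F}[U,U^{-1}]$. For $t\in[0,2]$ I would define the real weight $\rho_t([x,i,j])=(1-\tfrac{t}{2})\,i+\tfrac{t}{2}\,j$ and let $\mathcal{F}_t(s)\subseteq CFK^\infty(K)$ be the subcomplex generated by those elements with $\rho_t\le s$. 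Then $\Upsilon_K(t)$ is, up to the normalizing factor $-2$, the least $s$ for which the distinguished generator of $H_*(CFK^\infty(K))\cong\mathbb{F}[U,U^{-1}]$ lies in the image of $H_*(\mathcal{F}_t(s))$. With this description the strategy is to read each property off either from the combinatorics of $\rho_t$ as a function of $t$, or from a known symmetry or product structure of the complex.

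The structural statements (6) and (7) come directly from finite generation: for a fixed homology class, $\rho_t$ restricted to any single generator is an affine function of $t$ with slope $\tfrac12(j-i)=\tfrac12 A(x)$, so the optimal filtration level is a minimum, over finitely many generators, of finitely many affine functions of $t$. Such a minimum is piecewise linear with finitely many breakpoints, giving (6); and on each linear piece its $t$-derivative is $\tfrac12 A(x)$ for some generator, so after multiplying by the normalization $-2$ the slope of $\Upsilon_K$ equals $-A(x)\in\mathbb{Z}$, which is (7). Property (2) follows because at $t=0$ the weight $\rho_0$ is just the algebraic ($i$-)filtration, for which the generator of $H_*$ appears at level $0$, matching the vanishing of the $d$-invariant of $S^3$. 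The symmetry (1) follows from the conjugation symmetry of knot Floer homology, which provides a filtered homotopy equivalence of $CFK^\infty(K)$ with the complex obtained by interchanging the $i$- and $j$-coordinates; since $\rho_{2-t}$ is exactly $\rho_t$ with $i$ and $j$ swapped, the two minimization problems agree. Likewise (5) follows from the identification $CFK^\infty(-K)\simeq CFK^\infty(K)^{\ast}$: dualizing reverses all gradings and filtration levels, so the minimal level for $-K$ is the negative of that for $K$.

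The two properties that carry the real content are (3) and (4). For additivity (4) I would invoke the Künneth theorem $CFK^\infty(K_1\#K_2)\simeq CFK^\infty(K_1)\otimes_{\mathbb{F}[U,U^{-1}]}CFK^\infty(K_2)$ as filtered complexes, under which $\rho_t$ is additive: a generator of the tensor product has weight equal to the sum of the weights of its factors. One then has to show that the optimal level for the product is the sum of the optimal levels for the factors; this reduces to the fact that the distinguished generator of $H_*(C_1\otimes C_2)$ is the tensor product of the distinguished generators, together with the elementary observation that the minimal weight realizing a tensor-product class equals the sum of the minimal weights realizing the two factor classes. For the derivative at zero (3), the class computing $\Upsilon_K(0)$ is supported, after a filtered change of basis, in the $i=0$ column, i.e. in $\widehat{CFK}(K)$; as $t$ increases from $0$ the linear piece through $t=0$ has slope governed by the Alexander grading of the cycle realizing $\tau(K)$, namely the minimal Alexander level supporting the generator of $\widehat{HF}(S^3)$. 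Matching this against the definition of $\tau$ yields $\Upsilon_K'(0)=-\tau(K)$.

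I expect the main obstacle to be property (3): unlike the others, it equates the infinitesimal behavior of $\Upsilon$ at an endpoint with a separately-defined invariant, so it requires pinning down exactly which generator realizes the minimum for small $t>0$ and verifying that its Alexander grading is equal to $\tau(K)$ rather than merely bounded by it. The tensor-product optimality needed for (4) is the second most delicate point, since one must rule out that a ``mixed'' generator of $C_1\otimes C_2$ undercuts the sum of the individual optima; this is handled by the standard argument that, for filtered complexes over $\mathbb{F}[U,U^{-1}]$ with one-dimensional homology, the filtration level of the generator is additive under tensor product.
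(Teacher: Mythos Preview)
The paper does not give its own proof of this theorem; it is quoted as a result of Ozsv\'ath--Stipsicz--Szab\'o \cite{OSS}, with Livingston's reformulation \cite{Li} noted as a reference, and is used as a black box in Section~4. So there is no ``paper's proof'' to compare against.

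That said, your outline is essentially a sketch of Livingston's approach in \cite{Li}, which is one of the two cited sources, and the overall strategy is sound. A couple of places where the sketch is looser than it should be: for (6) and (7) you describe the optimal filtration level as a minimum over finitely many \emph{generators} of affine functions of $t$, but in fact it is a minimum over finitely many \emph{cycles} representing the distinguished class, and the $\rho_t$-level of a cycle is the \emph{maximum} of $\rho_t$ over its constituent generators. The conclusion (piecewise linear, finitely many breakpoints, integer slopes after the $-2$ normalization) survives, but the argument is a min--max rather than a plain min. For (4), the ``elementary observation'' that the minimal weight of the tensor class equals the sum of the minimal weights of the factors needs both inequalities: one is trivial, but the other uses that the tensor product of the two optimal cycles actually represents the generator of $H_*(C_1\otimes C_2)$, which is exactly where the K\"unneth identification is invoked. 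Your own caveat about (3) is well placed: identifying the slope at $t=0^+$ with $-\tau(K)$ requires knowing that for all sufficiently small $t>0$ the optimizing cycle can be taken to lie in the $i=0$ column with maximal Alexander grading exactly $\tau(K)$, not merely at most $\tau(K)$; this is where the definition of $\tau$ as the minimal Alexander level at which the inclusion into $\widehat{HF}(S^3)$ is nontrivial is used in both directions.
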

\smallskip
Note that Theorem \ref{OS} and Theorem \ref{OSS}(4) imply $\Upsilon:\mathcal{C}\to PL([0,2],\mathbb{R})$ is a homomorphism.\\

In \cite{R}, Rasmussen introduced the {\em local h-invariants}, denoted $\{V_i|i\in\mathbb{Z}\}$ in \cite{NW}, which are a family of integer-valued knot concordance invariants.

\begin{Th}\cite{BCG}\label{BCG}
Let $K_1$, $K_2$ be two knots in $S^3$. Then for any non-negative integers $i_1$, $i_2$,$$V_{i_1+i_2}(K_1\#K_2)\le V_{i_1}(K_1)+V_{i_2}(K_2).$$
\end{Th}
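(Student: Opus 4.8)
The plan is to read the inequality off from the connected-sum formula in knot Floer homology, following \cite{BCG}. Write $\mathbb{F}=\mathbb{Z}/2\mathbb{Z}$, write $CFK^\infty(K)$ with its algebraic filtration $\mathsf{i}$ and Alexander filtration $\mathsf{j}$, and recall the description of $V_i(K)$ via the large-surgery formula of Ozsv\'ath--Szab\'o in its minus flavor: the subcomplexes $A^-_i(K)=C\{\mathsf{i}\le 0,\ \mathsf{j}\le i\}$ and $B^-=C\{\mathsf{i}\le 0\}=CF^-(S^3)$ satisfy $H_*(B^-)\cong\mathbb{F}[U]$ and $H_*(A^-_i(K))\cong\mathbb{F}[U]\oplus(\mathrm{torsion})$, and the inclusion $A^-_i(K)\hookrightarrow B^-$ induces on the free summands multiplication by $U^{V_i(K)}$ with $V_i(K)\ge 0$ (one has $H_*(A^-_i(K))\cong HF^-(S^3_N(K),[i])$ for $N\gg0$, but only this algebraic picture is needed). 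I would then invoke the Künneth theorem for knot Floer homology: there is a bifiltered chain homotopy equivalence $CFK^\infty(K_1\#K_2)\simeq CFK^\infty(K_1)\otimes_{\mathbb{F}[U,U^{-1}]}CFK^\infty(K_2)$ under which the two filtrations and the Maslov grading add. Since filtration levels add, a class in the image of $A^-_{i_1}(K_1)\otimes_{\mathbb{F}[U]}A^-_{i_2}(K_2)$ has $\mathsf{i}\le 0$ and $\mathsf{j}\le i_1+i_2$, hence lands in $A^-_{i_1+i_2}(K_1\#K_2)$, and similarly $B^-(K_1)\otimes_{\mathbb{F}[U]}B^-(K_2)$ maps into $B^-(K_1\#K_2)$ by the standard connected-sum map. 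By naturality of the Künneth equivalence these fit into a commuting square of $\mathbb{F}[U]$-equivariant, grading-preserving chain maps
\[
\begin{CD}
A^-_{i_1}(K_1)\otimes_{\mathbb{F}[U]}A^-_{i_2}(K_2) @>\Phi>> A^-_{i_1+i_2}(K_1\#K_2)\\
@VVV @VVV\\
B^-(K_1)\otimes_{\mathbb{F}[U]}B^-(K_2) @>\sim>> B^-(K_1\#K_2)
\end{CD}
\]
whose verticals are the inclusions above and whose bottom row is a quasi-isomorphism.

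Next I would pass to homology and restrict to the free $\mathbb{F}[U]$-summands. The bottom row becomes an isomorphism $\mathbb{F}[U]\to\mathbb{F}[U]$ sending the distinguished generator to the distinguished generator (this is where the normalization $HF^-(S^3)=\mathbb{F}[U]_{(0)}$ is used). The free part of the top-left homology is $\mathbb{F}[U]\langle\alpha\rangle$ with $\alpha=\beta_1\otimes\beta_2$, $\beta_a$ the free generator of $H_*(A^-_{i_a}(K_a))$, and the left vertical carries $\alpha$ to $U^{V_{i_1}(K_1)+V_{i_2}(K_2)}$ times the generator downstairs; the right vertical carries the free generator $\beta$ of $H_*(A^-_{i_1+i_2}(K_1\#K_2))$ to $U^{V_{i_1+i_2}(K_1\#K_2)}$ times the generator downstairs. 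Finally $\Phi_*(\alpha)$ is an honest element of $H_*(A^-_{i_1+i_2}(K_1\#K_2))$, so its free part equals $U^{c}\beta$ for some $c$, and $c\ge 0$ precisely because that free summand is a module over $\mathbb{F}[U]$ rather than over $\mathbb{F}[U,U^{-1}]$; moreover $c<\infty$ because commutativity prevents $\Phi_*(\alpha)$ from being torsion. Comparing coefficients around the square yields $c+V_{i_1+i_2}(K_1\#K_2)=V_{i_1}(K_1)+V_{i_2}(K_2)$, and since $c\ge0$ this is the claimed inequality.

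The one step requiring genuine care is installing the commuting square: one must check that the Künneth equivalence really is compatible with the truncations defining the $A^-$-complexes and with the inclusions into $B^-$, and fix the grading conventions so that the bottom row is the canonical identification with no stray power of $U$. This is the familiar interaction between the connected-sum formula and the large-surgery formula---not deep, but it carries all the bookkeeping; once it is granted the conclusion is the coefficient comparison above. A more geometric alternative would express $V_i(K)$ via the correction terms $d(S^3_N(K),[i])$ for $N\gg0$, use additivity of $d$ under connected sum to compute the correction terms of $S^3_{N_1}(K_1)\#S^3_{N_2}(K_2)$, and compare with $S^3_{N}(K_1\#K_2)$ through a negative-definite cobordism between these two $3$-manifolds that is sharp for the unknot; the $d$-invariant inequality for negative-definite cobordisms then gives the same bound.
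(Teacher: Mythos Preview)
The paper does not prove this statement at all: Theorem~\ref{BCG} is quoted from \cite{BCG} as a black box and used later in the proof of Theorem~\ref{MT}. So there is no ``paper's own proof'' to compare against.

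That said, your strategy is the standard one and is essentially what appears in \cite{BCG}. The key ingredients---reading $V_i(K)$ as the $U$-exponent in the induced map $H_*(A^-_i(K))\to H_*(B^-)\cong\mathbb{F}[U]$, using the K\"unneth equivalence $CFK^\infty(K_1\#K_2)\simeq CFK^\infty(K_1)\otimes CFK^\infty(K_2)$ with additive filtrations, and observing that $A^-_{i_1}(K_1)\otimes A^-_{i_2}(K_2)$ lands in $A^-_{i_1+i_2}(K_1\#K_2)$---are all correct. Your commuting-square argument and the observation that $\Phi_*(\alpha)$ cannot be torsion (since the other route around the square produces a non-torsion class) are exactly right, and the extraction of $c\ge 0$ from $c+V_{i_1+i_2}(K_1\#K_2)=V_{i_1}(K_1)+V_{i_2}(K_2)$ is the intended punchline. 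The caveat you flag about verifying compatibility of the K\"unneth equivalence with the $A^-$-truncations and the inclusions into $B^-$ is genuine bookkeeping but not a gap; it follows from the bifiltered nature of the equivalence. Your alternative $d$-invariant route via a negative-definite cobordism is also viable and is closer in spirit to how such inequalities are sometimes packaged, but the direct chain-level argument you gave first is cleaner here.
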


\bigskip
The following results from Chen's thesis \cite{C} will be very useful.
\begin{Th}\cite{C}\label{C1}
For any knot $K$, $-2V_0(K)\le\Upsilon_K(t)\le 2V_0(-K)$.
\end{Th}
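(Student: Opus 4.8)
The plan is to express both $\Upsilon_K(t)$ and $V_0(\pm K)$ in terms of the full knot Floer complex $CFK^\infty(K)$ and to reduce the inequality to a pointwise convexity estimate. Fix a filtered basis for $CFK^\infty(K)$ and record each basis generator $x$ together with its algebraic and Alexander filtration levels $i(x),j(x)\in\mathbb{Z}$ (equivalently, its position in the $(i,j)$-plane). Write $\mathbf{x}_0$ for the canonical generator of the degree-zero part of $H_*(CFK^\infty(K))\cong\mathbb{Z}/2[U,U^{-1}]$, and let $\mathcal{Z}$ be the set of cycles in $CFK^\infty(K)$ representing $\mathbf{x}_0$.

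First I would invoke Livingston's description of the Upsilon invariant directly from the full complex (\cite{Li}; see also \cite{OSS}): for $t\in[0,2]$,
\[
\Upsilon_K(t)=\max_{z\in\mathcal{Z}}\ \min_{x\in\operatorname{supp}(z)}\ \gamma_t(x),\qquad \gamma_t(x)=-2\big[(1-\tfrac{t}{2})\,i(x)+\tfrac{t}{2}\,j(x)\big],
\]
after sanity-checking the normalizations $\Upsilon_K(0)=\Upsilon_K(2)=0$, $\Upsilon_K'(0)=-\tau(K)$, and invariance under filtered chain homotopy on a staircase model. Next I would record the parallel description of $V_0(K)$: it is the least integer $m\ge 0$ for which $U^m\mathbf{x}_0$ admits a representative supported in the closed third quadrant $C\{i\le 0,\ j\le 0\}=C\{\max(i,j)\le 0\}$; equivalently,
\[
-2V_0(K)=\max_{z\in\mathcal{Z}}\ \min_{x\in\operatorname{supp}(z)}\big(-2\max(i(x),j(x))\big).
\]
This follows from the identification of $H_*(A_0^-)$, with $A_0^-=C\{\max(i,j)\le 0\}$, with large-surgery knot Floer homology, together with the defining property of $V_0$ as the $U$-power of the natural map $A_0^-\to B^-=C\{i\le 0\}$ (I would cite \cite{NW}, \cite{R}). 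The two displays are genuinely comparable: $V_0(K)\ge 0$, and every $z\in\mathcal{Z}$ contains a generator with $i(x)\ge 0$, since otherwise $z$ would be a cycle in $C\{i\le -1\}$ whereas $\mathbf{x}_0\notin\operatorname{im}\big(H_*(C\{i\le -1\})\to H_*(CFK^\infty(K))\big)$; hence both outer maxima are over the same index set $\mathcal{Z}$ and both inner data are negative.

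Granting the two displays, the inequality is immediate: for $t\in[0,2]$ the coefficients $1-\tfrac t2$ and $\tfrac t2$ are nonnegative and sum to $1$, so $(1-\tfrac t2)i(x)+\tfrac t2 j(x)\le\max(i(x),j(x))$, hence $\gamma_t(x)\ge -2\max(i(x),j(x))$ for every generator $x$; taking $\min$ over $\operatorname{supp}(z)$ and then $\max$ over $\mathcal{Z}$ gives $\Upsilon_K(t)\ge -2V_0(K)$. Applying this to the mirror $-K$ and using $\Upsilon_{-K}(t)=-\Upsilon_K(t)$ from Theorem~\ref{OSS}(5) yields $-\Upsilon_K(t)\ge -2V_0(-K)$, i.e.\ $\Upsilon_K(t)\le 2V_0(-K)$, which is the remaining half.

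The main obstacle is not the convexity estimate but setting up the two descriptions so that they literally use the same plane coordinates and the same homology class $\mathbf{x}_0$: pinning down the grading and sign conventions in Livingston's formula, and translating the usual definition of $V_0$ (via $d$-invariants of large surgeries, or via $v_0^+\colon H_*(A_0^+)\to H_*(B^+)$) into the statement that $U^{V_0(K)}\mathbf{x}_0$ is exactly the least power of $U$ on $\mathbf{x}_0$ that can be pushed into the third quadrant. Once that bookkeeping is settled, the estimate is uniform in $t\in[0,2]$ and purely formal.
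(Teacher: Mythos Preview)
The paper does not contain its own proof of this statement; Theorem~\ref{C1} is quoted without proof as a result of Chen's thesis \cite{C}, so there is nothing in the paper to compare against. Your proposal therefore cannot be checked against ``the paper's proof,'' only against the underlying mathematics.

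On the mathematics: your argument is essentially correct and is in fact the natural (and presumably Chen's) proof. The reformulation $-2V_0(K)=\max_{z\in\mathcal{Z}}\min_{x\in\operatorname{supp}(z)}\bigl(-2\max(i(x),j(x))\bigr)$ is right once one observes that $U$ acts bijectively on $CFK^\infty$, so every cycle representing $U^m\mathbf{x}_0$ is $U^m$ times a cycle representing $\mathbf{x}_0$; hence the minimal $m$ with a third-quadrant representative of $U^m\mathbf{x}_0$ equals $\min_{z\in\mathcal{Z}}\max_{x\in\operatorname{supp}(z)}\max(i(x),j(x))$. With Livingston's description of $\Upsilon$ in the same coordinates, the convexity inequality $(1-\tfrac t2)i+\tfrac t2 j\le\max(i,j)$ for $t\in[0,2]$ immediately gives $\Upsilon_K(t)\ge -2V_0(K)$, and the mirror trick via Theorem~\ref{OSS}(5) gives the upper bound. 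Your own caveat is the only real issue: the formula for $\Upsilon$ in \cite{Li} and the definition of $V_0$ in \cite{NW,R} use slightly different packaging (half-plane filtrations versus the $A_0^\pm$ subcomplex and $d$-invariants), and you must align the grading conventions and the choice of $\mathbf{x}_0$ carefully. The sentence asserting that ``both inner data are negative'' is not quite right (they can be zero, and for generators with $i<0$ the quantity $-2\max(i,j)$ can be positive), but this is harmless for the argument since the pointwise inequality $\gamma_t(x)\ge -2\max(i(x),j(x))$ holds regardless of sign.
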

\smallskip
\begin{Prop}\cite{C}\label{C2}
Let $\{K_n|n\in\mathbb{Z}^+\}$ be a family of knots such that $$\lim_{n\to\infty}\frac{\tau(K_n)}{V_0(K_n)}=\infty,$$ then there exists a subset of $\{K_n|n\in\mathbb{Z}^+\}$ which generates a $\mathbb{Z}^\infty$-subgroup in $\mathcal{C}$.
\end{Prop}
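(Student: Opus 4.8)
The plan is to use the homomorphism $\Upsilon:\mathcal{C}\to PL([0,2],\mathbb{R})$ to detect linear independence. Since $\Upsilon$ is a group homomorphism (Theorem \ref{OS} and Theorem \ref{OSS}(4)), it suffices to extract a subsequence $\{K_{n_j}\}$ for which the piecewise-linear functions $\{\Upsilon_{K_{n_j}}\}$ are linearly independent in $PL([0,2],\mathbb{R})$: a relation $\sum a_i K_{n_i}=0$ in $\mathcal{C}$ would force $\sum a_i\Upsilon_{K_{n_i}}=\Upsilon_{\sum a_i K_{n_i}}=0$, so all $a_i=0$, and then the infinite subset $\{K_{n_j}\}$ generates a free abelian subgroup of infinite rank, i.e. a copy of $\mathbb{Z}^\infty$.

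First I would locate the first singularity of each $\Upsilon_{K_n}$ and show it occurs very early. By Theorem \ref{OSS}(2),(3) we have $\Upsilon_{K_n}(0)=0$ and $\Upsilon_{K_n}'(0)=-\tau(K_n)$, so up to its first break point $t_n$ the function is exactly the line $\Upsilon_{K_n}(t)=-\tau(K_n)\,t$. (For large $n$ both $\tau(K_n)>0$ and $V_0(K_n)>0$: one always has $V_0\ge 0$, the hypothesis forces the ratio to be a large positive number, and the degenerate case $V_0(K_n)=0,\ \tau(K_n)>0$ is excluded by Theorem \ref{C1}, which would otherwise give $\Upsilon_{K_n}(t)\ge 0$ contradicting the negative initial slope.) Evaluating at $t_n$ and applying the lower bound of Theorem \ref{C1} gives $-\tau(K_n)\,t_n=\Upsilon_{K_n}(t_n)\ge -2V_0(K_n)$, hence
\[
t_n\le \frac{2V_0(K_n)}{\tau(K_n)}=\frac{2}{\tau(K_n)/V_0(K_n)}\xrightarrow[n\to\infty]{}0 .
\]
A break point $t_n>0$ exists because the line of slope $-\tau(K_n)\ne 0$ cannot persist on all of $[0,2]$ (indeed $\Upsilon_{K_n}(2)=\Upsilon_{K_n}(0)=0$ by Theorem \ref{OSS}(1)), while Theorem \ref{OSS}(6) guarantees only finitely many singularities, so the smallest positive one is well defined. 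Since $t_n\to 0$ with all $t_n>0$, infinitely many $t_n$ lie below any positive threshold, and a greedy choice yields a subsequence $\{K_{n_j}\}$ with strictly decreasing first break points $t_{n_1}>t_{n_2}>\cdots>0$.

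The heart of the argument is then a triangularity observation. For a fixed $t$, the slope jump $\delta_t(g)=g'(t^+)-g'(t^-)$ is a linear functional on piecewise-linear functions. Consider any nontrivial integer combination $f=\sum_{i=1}^m a_i\Upsilon_{K_{n_i}}$ and let $j$ be the largest index with $a_j\ne 0$. For every $i<j$ we have $t_{n_j}<t_{n_i}$, so $t_{n_j}$ lies in the initial linear segment $[0,t_{n_i})$ of $\Upsilon_{K_{n_i}}$, where there is no break; thus $\delta_{t_{n_j}}(\Upsilon_{K_{n_i}})=0$. For $i=j$ the point $t_{n_j}$ is by definition a genuine break of $\Upsilon_{K_{n_j}}$, so $\delta_{t_{n_j}}(\Upsilon_{K_{n_j}})\ne 0$. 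Terms with $i>j$ vanish since $a_i=0$. Therefore
\[
\delta_{t_{n_j}}(f)=a_j\,\delta_{t_{n_j}}(\Upsilon_{K_{n_j}})\ne 0 ,
\]
so $f\not\equiv 0$. This proves linear independence of $\{\Upsilon_{K_{n_j}}\}$ and hence of $\{K_{n_j}\}$ in $\mathcal{C}$.

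I expect the main obstacle to be pinning down the location of the first singularity cleanly: combining the initial-slope identity $\Upsilon_K'(0)=-\tau(K)$ with the $V_0$-bound of Theorem \ref{C1} to force $t_n\to 0$, and then being careful that before its first break point each $\Upsilon_{K_n}$ is genuinely the straight line of slope $-\tau(K_n)$. This last point is exactly what makes the slope-jump functionals $\delta_{t_{n_j}}$ triangular and non-cancelling, and so is the crux of the linear-independence step.
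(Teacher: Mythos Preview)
The paper does not actually supply a proof of this proposition: it is quoted from Chen's thesis \cite{C} and stated without argument, so there is no in-paper proof to compare against. Your job is therefore just to produce a valid proof, and you have.

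Your argument is correct and is essentially the intended one. The only places worth tightening are minor. First, make the sign/positivity discussion airtight: the hypothesis $\tau(K_n)/V_0(K_n)\to\infty$ together with $V_0\ge 0$ forces, for large $n$, that $V_0(K_n)>0$ and $\tau(K_n)>0$; you argue this, but it could be said in one clean sentence rather than as an aside. Second, in the triangularity step you should perhaps phrase it as taking the \emph{smallest} first-break time among the indices appearing nontrivially (equivalently your ``largest $j$'' once the $t_{n_j}$ are strictly decreasing); the point is that at the smallest such break time every other summand is still on its initial linear segment, so only one term contributes to the slope jump. That is exactly what you wrote, but stating it this way makes clear the argument does not depend on how the subsequence is indexed.

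One stylistic remark: you do not really need to pass to a subsequence with \emph{strictly} decreasing first break points. It suffices to choose $n_1<n_2<\cdots$ so that $t_{n_{j+1}}<t_{n_j}$, which is immediate from $t_n\to 0$, $t_n>0$. Your greedy extraction already does this, so nothing changes.

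In short: the paper defers the proof to \cite{C}; your proof is correct and is the standard $\Upsilon$-first-singularity triangularity argument one would expect to find there.
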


\section{$\tilde{H}$-cobordism and satellite knots}\label{s}
Let $K$ be a knot in $S^3$. Let $P$ be a knot in a solid torus $S^1\times D^2$. Let $p:S^1\times D^2\to S^3$ be an embedding which identifies a regular neighborhood of a knot $K$ with $S^1\times D^2$ so that $p(S^1\times pt)$ is the Seifert framing of $K$. Then the knot $P(K)$ is defined to be the image of $P$ in $S^1\times D^2$ under the map $p$. $P^n(K)$ is defined to be $P(P^{n-1}(K))$ and $P^0(K)=K$. $P(K)$ is called a {\em satellite knot} with {\em pattern} $P$ and {\em companion} $K$. See Figure \ref{PC}. The {\em winding number} of $P$ is the algebraic intersection number of $P$ with a meridian disk of the solid torus.
\begin{figure}
 \centering
 \includegraphics[scale=0.15]{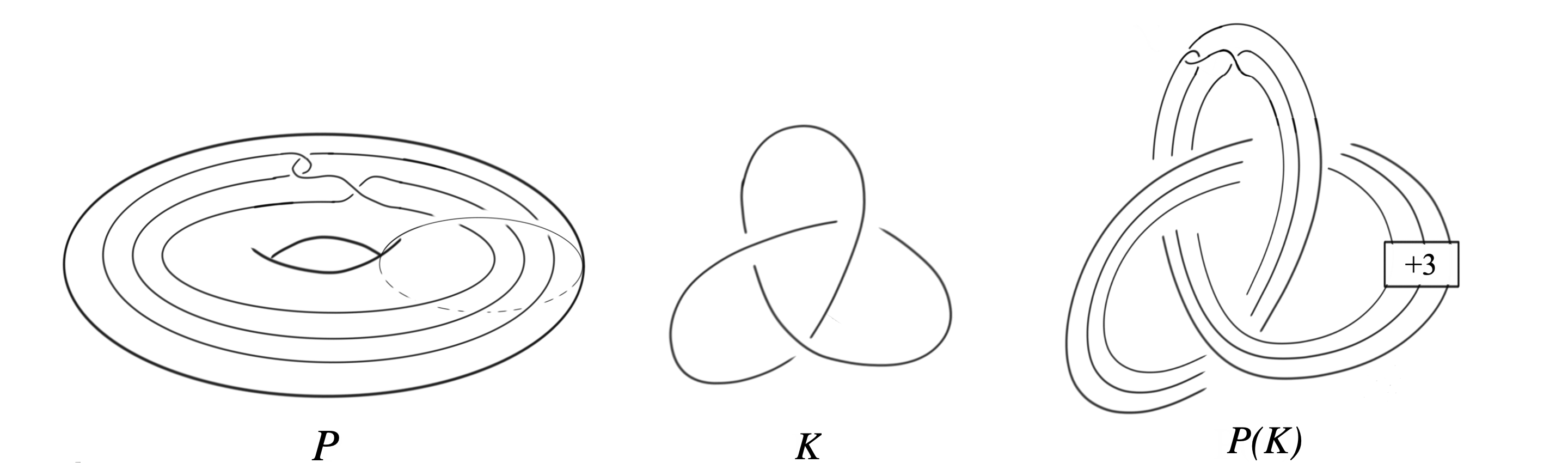}
 \caption{Satellite operation}
 \label{PC}
\end{figure}

\begin{Th}\label{thm1}
Suppose that $P$ is a pattern knot with winding number $\pm1$, and that $P(U)$ is a trivial knot in $S^3$, where $U$ is an unknot. Then for any knot $K$, $S_{0}^{3}(K)\sim S_{0}^{3}(P(K))$.
\end{Th}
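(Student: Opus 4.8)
The plan is to exhibit an explicit $\widetilde H$-cobordism $W$ from $\omega(K)=(S_0^3(K),[m_0])$ to $\omega(P(K))=(S_0^3(P(K)),[m_1])$, obtained by gluing a product cobordism on the exterior of $K$ to a cobordism built out of the pattern alone. Write $V=S^1\times D^2$ for the pattern solid torus, $\mathcal E=V\setminus\nu(P)$ for the pattern exterior (outer torus $\partial_+\mathcal E=\partial V$, inner torus $\partial_-\mathcal E=\partial\nu(P)$), and $X(K)=S^3\setminus\nu(K)$. The satellite construction identifies $S^3\setminus\nu(P(K))$ with $X(K)\cup_h\mathcal E$, where $h\colon\partial V\to\partial\nu(K)$ sends $\mu_V\mapsto\mu_K$, $\lambda_V\mapsto\lambda_K$; and because $p$ uses the Seifert framing of $K$, the $0$-framing of $P(K)$ is, as a slope $\lambda_P$ on $\partial_-\mathcal E$, the same for every $K$ (in particular it equals the $0$-framing slope of the knot $P(U)$). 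Hence $S_0^3(P(K))=X(K)\cup_h C_0$ with $C_0:=\mathcal E\cup_{\lambda_P}(S^1\times D^2)$, while $S_0^3(K)=X(K)\cup_h V'$ where $V'$ is the solid torus whose meridian is $\lambda_V$. So it is enough to construct a compact oriented $4$-manifold $Z$ with corners that is a cobordism from $V'$ to $C_0$ relative to the identified boundary torus $\partial V$; one then sets $W=(X(K)\times[0,1])\cup_{\partial V\times[0,1]}Z$, glued by $h\times\mathrm{id}$.

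This is where the hypotheses enter. Applying the satellite decomposition with $K=U$, the solid torus $V_0^{\#}:=S^3\setminus p(V)$ is a genuine (unknotted) solid torus whose meridian is $\lambda_V$ — so $V_0^{\#}\cong V'$ rel $\partial V$ — and $S_0^3(U)=V_0^{\#}\cup_{\partial V}C_0$. Since $P(U)=U$ is the unknot it bounds a trivial slice disk $\widehat D\subset B^4$ whose exterior is $N:=B^4\setminus\nu(\widehat D)\cong S^1\times D^3$, with $\partial N=S_0^3(P(U))=S_0^3(U)$. Pushing the submanifold $V_0^{\#}\subset\partial N$ slightly into $N$ and setting $Z:=N\setminus\nu(V_0^{\#})$, the boundary $\partial Z$ is the union of $\partial N\setminus\operatorname{int}(V_0^{\#})=C_0$, a parallel copy of $V_0^{\#}$, and a collar $\partial V\times[0,1]$ joining their torus boundaries; thus $Z$ is precisely the desired cobordism from $V'$ (identified with $V_0^{\#}$) to $C_0$. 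Forming $W$ as above produces a connected oriented $4$-manifold with $\partial W=-S_0^3(K)\sqcup S_0^3(P(K))$.

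For condition (1) of the definition of $\widetilde H$-cobordism, a Mayer--Vietoris computation for $W=(X(K)\times I)\cup_{\partial V\times I}Z$ gives $H_1(W;\mathbb Z)\cong\mathbb Z$: along $\partial V$ one has $[\mu_V]=[\mu_K]$, a generator of $H_1(X(K))$, while inside $\mathcal E\subset Z$ one has $[\mu_V]=w\,[\mu_P]$ with $w=\pm1$ the winding number, and it is exactly $w=\pm1$ that kills the extra $\mathbb Z\langle\mu_P\rangle$ in $H_1(\mathcal E)$ and leaves $H_1(W)\cong\mathbb Z$ generated by the identified meridians. Taking $\varphi\in H^1(W;\mathbb Z)\cong\operatorname{Hom}(H_1(W),\mathbb Z)$ to be the generator with $\varphi([\mu_K])=1$, the restrictions $\varphi|_{S_0^3(K)}$ and $\varphi|_{S_0^3(P(K))}$ are Kronecker-dual to $[m_0]$ and $[m_1]$ respectively (for winding number $+1$; the case $-1$ is symmetric after reversing an orientation).

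The substantive point is condition (2): that $H_*(\widetilde W_\varphi;\mathbb Q)$ is finitely generated for each $*$. By Mayer--Vietoris it suffices to check this for the $\varphi$-associated infinite cyclic covers of the three pieces $X(K)\times I$, $\partial V\times I$, and $Z$. For $X(K)\times I\simeq X(K)$ the cover is $\widetilde{X(K)}\times I$, and $H_*(\widetilde{X(K)};\mathbb Q)$ is finitely generated because $\Delta_K(t)\neq0$ (Milnor's exact sequence, as used in \cite{M1}). For $\partial V\times I\simeq T^2$, $\varphi$ restricts to the dual of $\mu_V$, so the cover is $\mathbb R\times S^1$, with finitely generated rational homology. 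For $Z=N\setminus\nu(V_0^{\#})$: the winding number hypothesis makes the core of $V_0^{\#}$ a generator of $H_1(N)\cong\mathbb Z$, so the cover of $N\cong S^1\times D^3$ is $\mathbb R\times D^3$ and the covers of $\nu(V_0^{\#})$ and of its frontier (each $\simeq S^1$) are connected, hence contractible; applying Mayer--Vietoris to $N=Z\cup\nu(V_0^{\#})$ then forces $H_*(\widetilde Z;\mathbb Q)$ to be finitely generated. Assembling these gives $H_*(\widetilde W_\varphi;\mathbb Q)$ finitely generated, so $(W,\varphi)$ is an $\widetilde H$-cobordism. The main obstacle I anticipate is precisely this last step: the homological bookkeeping needed to see how $C_0$, $V_0^{\#}$ and the several covers fit together, together with a careful verification that $W$ has the asserted boundary. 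The two hypotheses are used exactly once each — $P(U)=U$ to supply the model $N\cong S^1\times D^3$ with contractible infinite cyclic cover, and $w=\pm1$ to pin down $H_1(W)$ with the meridians as generators and to make the torus-piece covers connected.
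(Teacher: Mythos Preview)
Your proof is correct and takes a genuinely different route from the paper's. The paper builds $W$ by attaching a single 4--dimensional 1--handle and then a $0$--framed 2--handle to $S_0^3(K)\times[0,1]$, identifies $\partial^+W\cong S_0^3(P(K))$ via Kirby moves (citing \cite{CFHH}), and verifies condition~(2) by writing down explicit presentations for $\pi_1(X)$, $\pi_1(X^1)$, $\pi_1(W)$ and tracking how the attached handles lift to the infinite cyclic cover, ultimately obtaining $H_*(\widetilde W_\varphi;\mathbb Q)\cong H_*(\widetilde X;\mathbb Q)$. You instead split along the companion torus, writing $W=(X(K)\times I)\cup_{\partial V\times I} Z$ with $Z$ carved out of the slice--disk complement $N\cong S^1\times D^3$ of $P(U)$, and run Mayer--Vietoris both downstairs (for $H_1(W)$) and upstairs (for $H_*(\widetilde W_\varphi;\mathbb Q)$). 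Your argument makes the roles of the two hypotheses especially transparent---$P(U)=U$ supplies a piece $Z\simeq N$ whose infinite cyclic cover is contractible, while $w=\pm1$ is precisely what makes $\varphi$ restrict surjectively to each piece so that the preimages in $\widetilde W_\varphi$ are the connected infinite cyclic covers and the meridians generate $H_1(W)$---and it avoids any Kirby calculus. The paper's handle approach, by contrast, gives a concrete surgery picture and pins down $\pi_1(W)$ exactly. One small imprecision worth tightening: your description of $Z$ as ``$N\setminus\nu(V_0^{\#})$ after pushing $V_0^{\#}$ into the interior'' is really just the removal of an open collar of $V_0^{\#}\subset\partial N$; once phrased this way it is immediate that $Z\simeq N$, and your separate Mayer--Vietoris computation for $\widetilde Z$ becomes unnecessary.
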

\begin{proof}
We will construct an $\widetilde{H}$-cobordism $W$ between $S_{0}^{3}(K)$ and $S_{0}^{3}(P(K))$ to show that $S_{0}^{3}(K)\sim S_{0}^{3}(P(K))$. Let $X^1$ be the 4-manifold by attaching a 1-handle to the outgoing boundary of the 4-manifold $X=S_{0}^{3}(K)\times[0,1]$. This boundary is depicted in Figure \ref{PK}(a), where we replace the dotted circle $P(U)$ typically used to denote a 1-handle with a zero-framed curve since the resulting boundaries are diffeomorphic. Now let $W$ be the 4-manifold obtained by attaching a 0-framed 2-handle to $\partial^+X^1$ along the red circle shown Figure \ref{PK}(b). Because $P(U)$ is an unknot, using an isotopy from $P(U)$ to a trivial unknot, we have the following Figure \ref{PK}(c). See Figure \ref{XW} for schematic pictures of $X$, $X^1$ and $W$. By handle slides, one can show that $\partial^+W\simeq S_{0}^{3}(P(K))$, see \cite[Theorem 2.1]{CFHH}.

\begin{figure}
\centering
\includegraphics[scale=0.13]{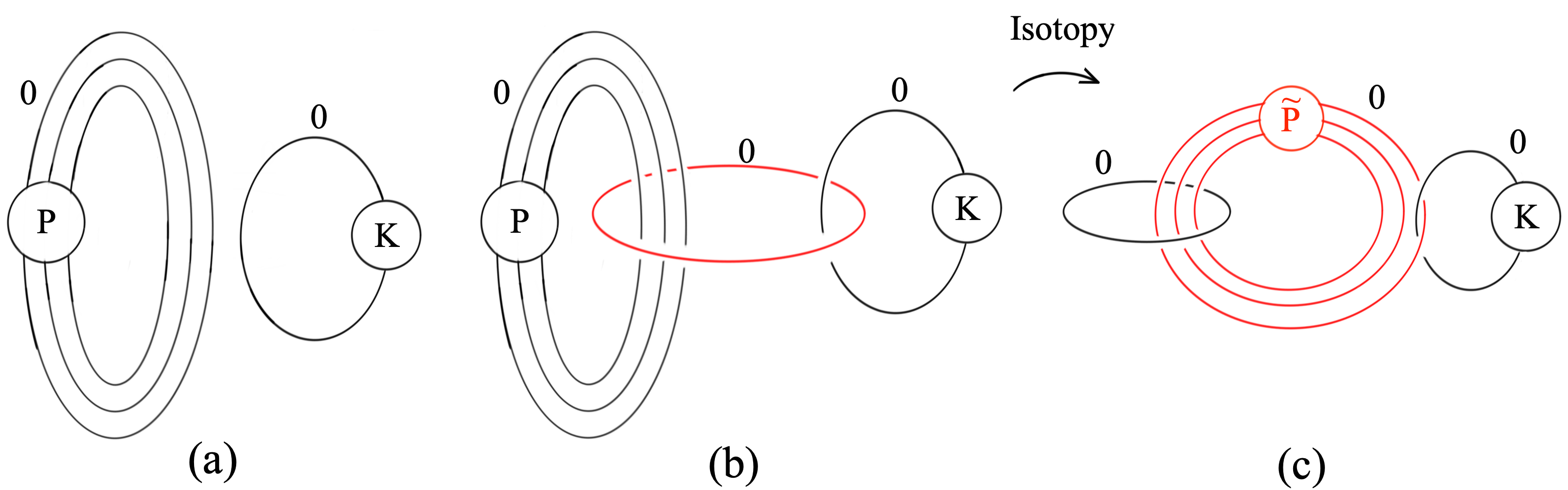}
\caption{(a) : $\partial^+X^1$ \ (b),(c) : $\partial^+W$}
\label{PK}
\end{figure}

\begin{figure}
\centering
\includegraphics[scale=0.1]{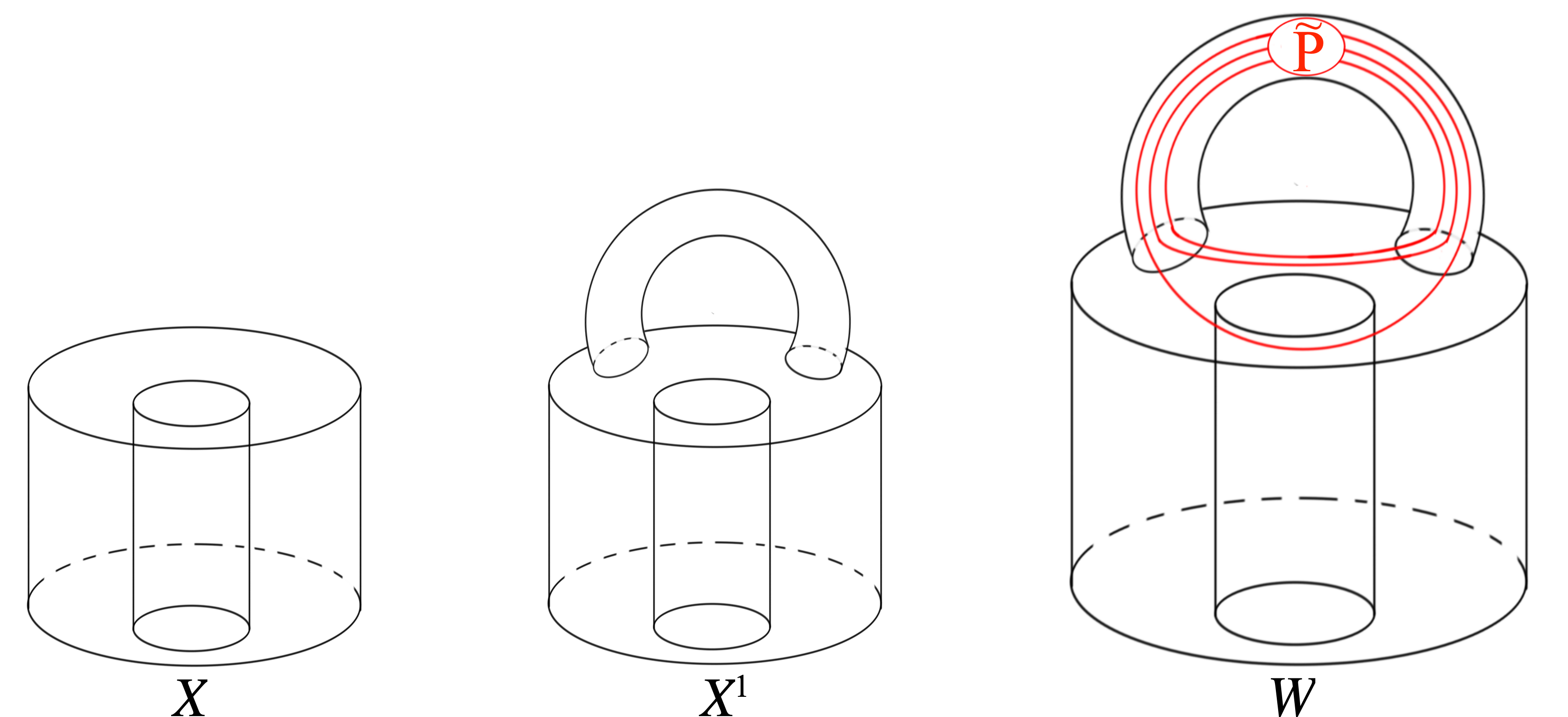}
\caption{$X, X^1$ and $W$}
\label{XW}
\end{figure}

We now show that this cobordism is an $\widetilde{H}$-cobordism, i.e., there is a cohomology class $\varphi$ for which the infinite cyclic covering $\widetilde{W}_\varphi$ of $W$ has finitely generated rational homology groups. Let $\pi_1(X)=\langle x_1, x_2,\dots,x_k|r_1, r_2,\dots,r_l,\lambda\rangle$ be the fundamental group of $X$, where $\langle x_1, x_2,\dots,x_k|r_1, r_2,\dots,r_l\rangle$ is the knot group of $K$ and $\lambda$ is a relator coming from the disk bounded by a 0-framed longitude. The abelianization map $A_X$ from $\pi_1(X)$ to $H_1(X;\mathbb{Z})$ provides a generator $[x_1]=[x_2]=\dots=[x_k]$ of $H_1(X;\mathbb{Z})\cong\mathbb{Z}$, which corresponds to the meridian of the knot $K$. Corresponding to $\mathrm{ker}(A_X)$, we have the infinite cyclic covering $\widetilde{X}$ of $X$ associated with $A_X$, and $\pi_1(\widetilde{X})\cong\mathrm{ker}(A_X)$. Indeed, $\mathrm{ker}(A_X)$ is the commutator subgroup $[\pi_1(X),\pi_1(X)]$ of $\pi_1(X)$ and the covering $\widetilde{X}$ is the {\em universal abelian covering space}. Attaching the 1-handle to the boundary $\partial^+X$ adds one extra generator $b$ to the presentation of $\pi_1$, so $\pi_1(X^1)=\langle x_1, x_2,\dots,x_k, b|r_1, r_2,\dots,r_l, \lambda\rangle$, where we orient $b$ to be compatible with winding number of the pattern $P$. Let $A_{X^1}$ be the abelianization map from $\pi_1(X^1)$ to $H_1(X^1; \mathbb{Z})\cong\mathbb{Z}\langle[x_1]=[x_2]=\dots=[x_k]\rangle\oplus\mathbb{Z}\langle[b]\rangle$. When we attach the 0-framed 2-handle to $\partial^+X^1$ to get $W$, the attaching region is homologous to $[x_1]+[b]$. Thus, the homomorphism $\varphi_1:H_1(X^1; \mathbb{Z})\to\mathbb{Z}$ defined by $[x_i]+[b]\mapsto0$ and $[x_i]\mapsto1$ can be considered as a map from $H_1(X^1; \mathbb{Z})$ to $H_1(W; \mathbb{Z})\cong\mathbb{Z}\langle[x_1]=[x_2]=\dots=[x_k]=-[b]\rangle$. Let $$\psi_1\coloneqq \varphi_1\circ A_{X^1}: \pi_1(X^1) \to H_1(X^1;\mathbb{Z}) \to H_1(W;\mathbb{Z})\cong \mathbb{Z}.$$ Note that the attaching region of the 1-handle in $\partial^{+}X$ is a disjoint union of two 3-balls, which are simply-connected. So, the attaching region can be lifted to $\widetilde{X}$. Thus, the infinite cyclic covering $\widetilde{X}^1$ of $X^1$ associated with $\varphi_1$ is obtained by attaching infinitely many 1-handles to the infinite cyclic covering $\widetilde{X}$ of $X$. See Figure \ref{CS}. 

\begin{figure}
\centering
\includegraphics[scale=0.17]{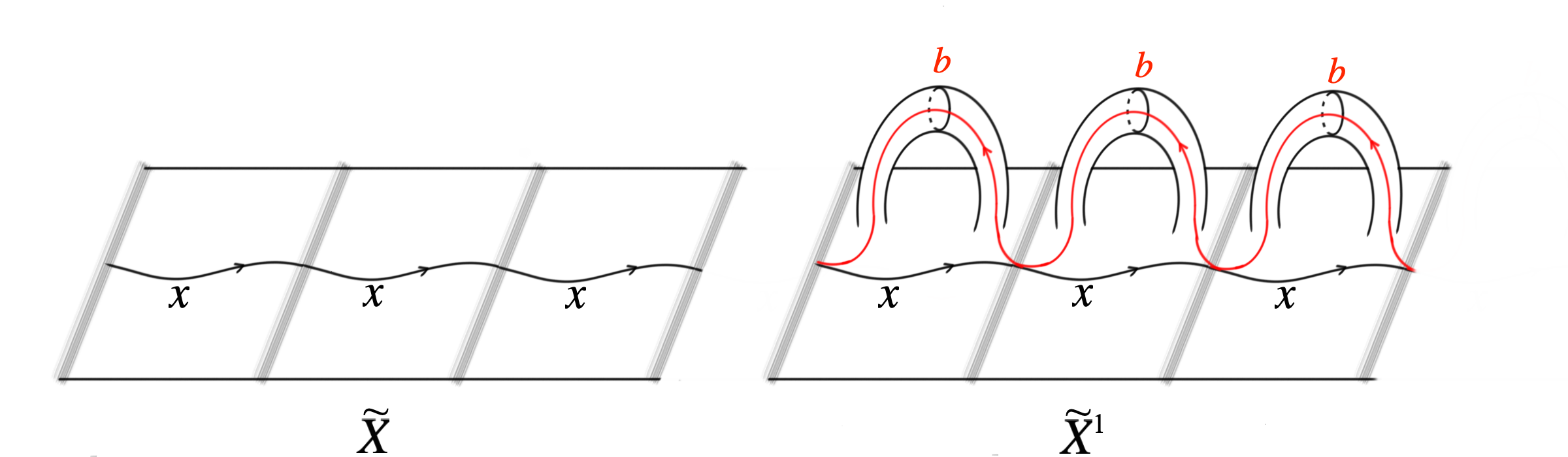}
\caption{$\widetilde{X}$ and $\widetilde{X}^1$}
\label{CS}
\end{figure}

It follows that$$
H_i(\widetilde{X}^1; \mathbb{Q})=
\begin{cases}
H_i(\widetilde{X}; \mathbb{Q}) &, i \ne 1\\
H_i(\widetilde{X}; \mathbb{Q})\oplus \mathbb{Q}[t,t^{-1}] &, i=1,
\end{cases}
$$where $t$ is a generator of the deck transformation group of the covering spaces. 

The attaching region of the 2-handle in $\partial^+X^1$ is homotopic to $x_{1}b$ and is contained in $\mathrm{ker}(\psi_1)$, which is the image of $\pi_1(\widetilde{X}^1)$ under the covering map. Hence, the attaching region of the 2-handle can be lifted to the covering $\widetilde{X}^1$. Attaching the 2-handle to $\partial^+X^1$ adds a relator $x_1b$ to the presentation of $\pi_1$, so we have
\begin{eqnarray}
\pi_1(W) & = & \langle x_1, x_2,\dots,x_k, b|r_1, r_2,\dots,r_l, x_1b\rangle\nonumber\\
 & = & \langle x_1, x_2,\dots,x_k, x_1^{-1}|r_1, r_2,\dots,r_l\rangle\nonumber\\
 & = & \langle x_1, x_2,\dots,x_k|r_1, r_2,\dots,r_l\rangle \cong \pi_1(X),\nonumber
\end{eqnarray}
and
\begin{eqnarray}
H_1(W) & = & \frac{\mathbb{Z}\langle[x_1]=[x_2]=\dots=[x_k]\rangle\oplus \mathbb{Z}\langle[b]\rangle}{<[x_1]+[b]>}\nonumber\\
& = & \mathbb{Z}\langle[x_1]=[x_2]=\dots=[x_k]=[-b]\rangle \cong \mathbb{Z}. \nonumber
\end{eqnarray}

Let $\varphi$ be the dual cohomology class of $[x_1]=[x_2]=\dots=[x_k]=[-b]$ in $H_1(W; \mathbb{Z})$. It is clear that $\varphi| _{\partial^{\pm}W}$ is dual to the generator $[x_i]$ of $H_1(\partial^{\pm}W; \mathbb{Z})$. The infinite cyclic covering $\widetilde{W}_\varphi$ of $W$ associated with $\varphi$ is obtained from $\widetilde{X}^1$ by attaching infinitely many 0-framed 2-handles along curves homotopic to elements $t^{n}x_{1}b$, $n\in\mathbb{Z}$, in $\pi_1(\widetilde{X}^1)$. Thus, $H_i(\widetilde{W}_\varphi; \mathbb{Q})=H_i(\widetilde{X};\mathbb{Q})$, and they are all finitely generated over $\mathbb{Q}$.
\end{proof}
\smallskip

\begin{Rem}\label{rmk1}
In \cite{CFHH}, the cobordism $W$ constructed in the proof of Theorem \ref{thm1} is used to show that $S_{0}^{3}(K)$ and $S_{0}^{3}(P(K))$ are $\mathbb{Z}$-homology cobordant rel meridians under the same assumption as Theorem \ref{thm1}. So, the cobordism $W$ is a non-trivial cobordism which is simultaneously a $\mathbb{Z}$-homology and $\widetilde{H}$-cobordism. In fact, we can easily find $\widetilde{H}$-cobordisms which are not $\mathbb{Z}$-homology cobordisms. But, we do not know whether every $\mathbb{Z}$-homology cobordism is an $\widetilde{H}$-cobordism.
\end{Rem}

\section{$\mathbb{Z}^\infty$-subgroup in $\mathrm{ker}(\omega)$}
In this section, using the properties of the knot concordance invariants reviewed in Section \ref{KCI} in conjunction with Theorem \ref{thm1}, we establish our theorem on the kernel of the zero-surgery homomorphism $\omega$.

\begin{Th}\label{MT}
The kernel of the zero-surgery homomorphism $\omega:\mathcal{C}\to\Omega(S^1\times S^2)$ contains a subgroup isomorphic to $\mathbb{Z}^\infty$.
\end{Th}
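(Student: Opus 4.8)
The plan is to exhibit an explicit infinite family $\{D_n\}_{n\ge 1}$ of knots, each lying in $\ker(\omega)$, and then to detect a $\mathbb{Z}^\infty$ worth of them inside the concordance group via Chen's criterion, Proposition \ref{C2}. Since $\ker(\omega)$ is a subgroup of $\mathcal{C}$, the $\mathbb{Z}^\infty$-subgroup generated by any subfamily of $\{D_n\}$ then automatically lies in $\ker(\omega)$.

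First I would fix a knot $K$ with $\tau(K)>0$, say the right-handed trefoil, let $P$ denote the Mazur pattern of Figure \ref{PC}, and set $J_n:=P^n(K)$ and $D_n:=J_n\#(-K)$. Since $P$ has winding number $+1$ and $P(U)$ is the unknot, Theorem \ref{thm1} applies to \emph{every} companion knot; iterating it gives $S^3_0(J_n)\sim S^3_0(J_{n-1})\sim\cdots\sim S^3_0(K)$. As $\omega$ is a homomorphism and $\omega(-K)=-\omega(K)$, this yields
$$\omega(D_n)=[S^3_0(J_n)]-[S^3_0(K)]=0,$$
so $D_n\in\ker(\omega)$ for every $n\ge 1$.

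Next I would compute the two knot-Floer quantities feeding Proposition \ref{C2}. For $\tau$: since $\tau(J_m)=\tau(K)+m>0$ at every stage, Theorem \ref{L1} applies repeatedly and gives $\tau(J_n)=\tau(K)+n$ by induction, so $\tau(D_n)=\tau(J_n)-\tau(K)=n$. For $V_0$: here I would invoke Remark \ref{rmk1}, which records that the cobordism produced in the proof of Theorem \ref{thm1} is simultaneously a $\mathbb{Z}$-homology cobordism rel meridians (as established in \cite{CFHH}). Stacking these $n$ cobordisms shows $S^3_0(J_n)$ and $S^3_0(K)$ are $\mathbb{Z}$-homology cobordant rel meridians, hence the Heegaard Floer correction terms $d_{\pm 1/2}$ of the (unique torsion) $\mathrm{spin}^c$ structure agree, and since $V_0$ is recovered from these correction terms we conclude $V_0(J_n)=V_0(K)$ for all $n$. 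Combining with subadditivity (Theorem \ref{BCG} with $i_1=i_2=0$) and non-negativity of $V_0$,
$$0\le V_0(D_n)=V_0\bigl(J_n\#(-K)\bigr)\le V_0(J_n)+V_0(-K)=V_0(K)+V_0(-K),$$
so $V_0(D_n)$ stays bounded while $\tau(D_n)=n\to\infty$. Therefore $\lim_{n\to\infty}\tau(D_n)/V_0(D_n)=\infty$, and Proposition \ref{C2} produces a subfamily of $\{D_n\}$ generating a $\mathbb{Z}^\infty$-subgroup of $\mathcal{C}$, which lies in $\ker(\omega)$ because each $D_n$ does.

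I expect the $\tau$-computation and the closing application of Proposition \ref{C2} to be routine; the crux is the boundedness of $V_0(D_n)$. This is exactly where the additional feature of the cobordism from Theorem \ref{thm1} — that it is not merely an $\widetilde{H}$-cobordism but also a $\mathbb{Z}$-homology cobordism rel meridians, following \cite{CFHH} and Remark \ref{rmk1} — does the essential work, in tandem with the standard fact that $V_0$ is determined by, and hence inherited across $\mathbb{Z}$-homology cobordisms by, the $d$-invariants of zero-surgery. One should also double-check the orientation and $\mathrm{spin}^c$ bookkeeping ("rel meridians") needed for that transfer, but this is already handled in \cite{CFHH}.
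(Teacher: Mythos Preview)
Your argument is essentially the paper's own proof: the same family $P^n(K)\#(-K)$ with $P$ the Mazur pattern, the same appeal to Theorem~\ref{thm1} to place these knots in $\ker(\omega)$, the same use of Levine's Theorem~\ref{L1} for $\tau$, the same combination of Remark~\ref{rmk1} with the $\mathbb{Z}$-homology-cobordism invariance of $V_0$ and subadditivity (Theorem~\ref{BCG}) to bound $V_0$, and the same closing appeal to Chen's Proposition~\ref{C2}.

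There is one small gap. To invoke Proposition~\ref{C2} you need the ratio $\tau(D_n)/V_0(D_n)$ to be defined, i.e.\ $V_0(D_n)>0$; your inequality $0\le V_0(D_n)\le V_0(K)+V_0(-K)$ does not rule out $V_0(D_n)=0$. The paper closes this gap using Theorem~\ref{C1} together with Theorem~\ref{OSS}(3): since $\tau(D_n)=n>0$, for sufficiently small $t>0$ one has $\Upsilon_{D_n}(t)=-nt<0$, and then $-2V_0(D_n)\le\Upsilon_{D_n}(t)<0$ forces $V_0(D_n)\ge 1$. With that one line added (and, for concreteness, $K=T_{2,3}$ so that $V_0(K)+V_0(-K)=1$), your proof matches the paper's.
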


\begin{proof}
Let $P$ be the Mazur pattern shown in Figure \ref{PC}. Let $T_{2,3}$ be a $(2,3)$-torus knot. Note that $V_0(T_{2,3})=1$, $V_0(-T_{2,3})=0$, and $\tau(T_{2,3})=1$. Moreover, $\Upsilon_{T_{2,3}}(t)=-t$ for $t\in[0,1]$. Let $K_n = P^n(T_{2,3})\# -T_{2,3}$.\

We first claim that the family $\{K_n|n\in\mathbb{Z}^+\}$ is mapped to $0$ in $\Omega(S^1\times S^2)$ by $\omega$, and that $K_n$ is a not slice knot for each $n$. By Theorem \ref{thm1}, $S_0^3(T_{2,3})\sim S_0^3(P^{n}(T_{2,3}))$. Then
\begin{eqnarray}
 0 & = & [S_0^3(P^{n}(T_{2,3}))\bigcirc -S_0^3(T_{2,3})]\nonumber\\
 & = & [S_0^3(P^{n}(T_{2,3})\#-T_{2,3})]\nonumber\\
 & = & \omega(P^{n}(T_{2,3})\#-T_{2,3}).\nonumber
\end{eqnarray}
Also, $\tau(K_n)=\tau(P^n(T_{2,3}) \# -T_{2,3})=\tau(P^n(T_{2,3}))-\tau(T_{2,3})=n+1-1=n$ by Theorem \ref{L1}. This proves the claim, showing that the family $\{K_n|n\in\mathbb{Z}^+\}$ is not-trivial elements in $\mathrm{ker}(\omega)$.\

Next, we will show that there is a subset of $\{K_n | n \in \mathbb{Z}^+\}$ which generates a $\mathbb{Z}^\infty$-subgroup in $\mathrm{ker}(\omega)$. By Theorem \ref{BCG}, $$V_0(K_n)=V_0(P^n(T_{2,3})\# -T_{2,3})\le V_0(P^n(T_{2,3}))+V_0(-T_{2,3}).$$
By Remark \ref{rmk1}, $S_{0}^{3}(P^n(T_{2,3}))$ and $S_{0}^{3}(T_{2,3})$ are $\mathbb{Z}$-homology cobordant. Note that $V_0$ is an invariant of the $\mathbb{Z}$-homology cobordism class of the zero-surgery, see \cite[Theorem 3.1]{HKMP}, i.e., if two knots have $\mathbb{Z}$-homology cobordant 0-surgeries, then they have the same $V_0$.  So, $V_0(P^n(T_{2,3})) = V_0(T_{2,3}) = 1$, and hence $V_0(K_n)\le 1$. By Theorem \ref{C1}, $$-2V_0(K_n)\le\Upsilon_{K_n}(t)=\Upsilon_{P^n(T_{2,3})}(t)-\Upsilon_{T_{2,3}}(t).$$ On $[0,\delta_n)$, with sufficiently small $\delta_n$ having no singularity of $\Upsilon_{P^n(T_{2,3})}(t)$, $$\Upsilon_{P^n(T_{2,3})}(t)-\Upsilon_{T_{2,3}}(t)=-(n+1)t+t=-nt,$$ since $\Upsilon'(0)=-\tau$ by Theorem \ref{OSS}(3). This implies $-2V_0(K_n)\le-nt$, and hence $V_0(K_n)>0$. So, $0<V_0(K_n)\le1$. Then $\lim_{n\to\infty}\frac{\tau(K_n)}{V_0(K_n)}=\infty$ because $\tau(K_n)=n$. By Proposition \ref{C2}, there exists a subset of $\{K_n|n\in\mathbb{Z}^+\}$ which generates a $\mathbb{Z}^\infty$-subgroup in $\mathrm{ker}(\omega)$.
\end{proof}

\end{document}